\newtheorem*{remark}{Remark}
\newtheorem{lemma}{Lemma}
\newtheorem{prop}{Proposition}
\newtheorem{theorem}{Theorem}
\author{Anna Melnykova\footnote{Laboratoire de Mathématiques d’Avignon, EA 2151, 
E-mail: Anna.Melnykova@univ-avignon.fr}, Patricia Reynaud-Bouret\footnote{Université C\^ote d'Azur, CNRS, LJAD
E-mail: Patricia.Reynaud-Bouret@univ-cotedazur.fr}, Adeline Samson \footnote{Universit\'e Grenoble Alpes, LJK UMR-CNRS 5224, 
E-mail: adeline.leclercq-samson@univ-grenoble-alpes.fr}}
\title{Non-asymptotic statistical test for the diffusion coefficient in 2-dimensional stochastic diffusions}
\date{}
\begin{document}

\maketitle


\begin{abstract}
 We develop several statistical tests of the determinant of the diffusion coefficient  of a  stochastic differential equation, based on discrete observations on a time interval $[0,T]$ sampled with a time step $\Delta$. Our main contribution is to control the test Type I and Type II errors  in a non asymptotic setting, i.e. when the number of observations and the time step are fixed.
The  test statistics are calculated from the process increments. 
In  dimension 1, the  density of the test statistic is   explicit.
In dimension 2, the test statistic has no explicit density but upper and lower bounds are proved. 
We also propose a multiple testing procedure in dimension greater than 2. Every test is proved to be of a given non-asymptotic level  and separability conditions to control their power are also provided. 
A numerical study illustrates the properties of the tests for  stochastic processes with known or estimated drifts.
\end{abstract}
\section{Introduction}\label{section:introduction}

Stochastic diffusion  is    a classical tool for modeling physical, biological or ecological dynamics. 
An open question is how stochasticity should be introduced into the  stochastic dynamic process, on what coordinate and at what scale. 
For example, diffusions have been widely used  to model  neuronal activity, either of a single neuron \citep{Ditlevsen2012, Hoepfner2016, leon2018samson}, or of a large neural network  \citep{Ditlevsen2017eva, Ableidinger2017}.
Although the intrinsic stochasticity  of neurons is well established, where and on what scale this stochasticity should be introduced (on ion channels or membrane potential or both) is   still a matter of debate \citep{Goldwyn2011}.  
Examples also exist in other applications, for example in   modeling   oscillatory systems or  movement behavior in ecology.
From a statistical point of view, this corresponds to testing the noise level of a multivariate diffusion process. The aim of this paper is to answer  this question.  
 {We propose to do this by testing whether the determinant of the diffusion coefficient is equal or larger than a certain  value. If the test is rejected, the process is considered as elliptic, since there is sufficient noise on all coordinates. }

Let us  formally introduce the stochastic process. 
Consider a filtered probability space $(\Omega, \mathcal{F},(\mathcal{F}_t)_{t\geq 0}, \mathds{P})$. Let $X$ be a $d-$dimensional process solution of the following Stochastic Differential Equation (SDE): 
\begin{equation}\label{eq:main_SDE}
dX_t = b_t dt + \Sigma dW_t, \quad X_0 = x_0, \quad t> 0,
\end{equation}
with a drift function $b_t$, 
a   diffusion matrix $\Sigma \in \mathds{R}^{d\times d}$, and $W$   a $d$-dimensional Brownian motion. In this paper, for simplicity's stake, 
we assume a diagonal $\Sigma$. 
We consider  discrete   observations of $X$ on a   time interval $[0, T]$ with a regular time step $\Delta$,  denoted $\left\{X_{i\Delta} \right\}_{i=0, \ldots, n}$.  

The objective is to 
construct a statistical test procedure to decide between the two following hypotheses :
\begin{align*}
H_0: \det \Sigma\Sigma^T = \det \Sigma_0\Sigma_0^T \\ 
H_1:  \det \Sigma\Sigma^T > \det \Sigma_0\Sigma_0^T. 
\end{align*}
Our test consists in rejecting the null hypothesis when an estimator of $ \det \Sigma\Sigma^T $, chosen as the testing statistic, is greater than a certain critical value. The main issue in  constructing the test procedure is the choice of the critical value guaranteeing that the test is exactly at the desired level $\alpha$. In addition, to understand the performance of the constructed procedure, we want to find conditions leading to non-asymptotic control of the Type II error. 

When working with real data, observations are sampled with a fixed time interval $[0, T]$ and a fixed time step $\Delta$. The framework is therefore non-asymptotic in the sense that we have  to control the Type I and Type II errors of the test procedure for   fixed $n$ and $\Delta$.  
Controlling the Type I and Type II errors of a statistical test in a non asymptotic setting is difficult. Here, it is all the more difficult because the non asymptotic framework is also an problem for SDE inference. Indeed, estimators of drift and diffusion coefficients have  {been} shown  to be consistent in different asymptotic settings (either $T$ fixed and $n$ going to infinity or $T$ going to infinity) but few results are available in a non-asymptotic setting.  Here, we face both difficulties.  {Especially, here, we will need non asymptotic behavior of the estimator of the diffusion coefficient.}

Several tests have been proposed  on the matrix $\Sigma\Sigma^T $ of a diffusion process, but  {only} in the  asymptotic setting  $\Delta$ goes to zero and $n$ goes to infinity \citep{Dette2008, Jacod2008, Podolskij2012}. The test statistic therefore has   an asymptotic distribution from which one can construct a statistical test with a given asymptotic level $\alpha$ through a rejection area. Among others, we can cite    \cite{Dette2008} which propose to test the parametric form of  the volatility with the  empirical processes of the integrated volatility. 
 \cite{Podolskij2012} construct a test statistic and derive its asymptotic behavior to test   the local volatility hypothesis. Their test statistic is a function  of the increments of the stochastic process.
\cite{Jacod2008, Jacod2013}   test the rank of the matrix $ \Sigma\Sigma^T$. In \cite{Jacod2008}, they consider  continuous-time observations of $X$ and construct a test statistic based on the process perturbed by a random noise. Introducing a random perturbation of the increment matrix  enables to apply a ratio statistic based on the multilinearity property of the determinant and ensures that the denominator of the ratio never vanishes. The authors prove that the limit of the   ratio statistic    identifies  the   rank of the volatility and they also study the asymptotic distribution of this statistic. 
In \cite{Jacod2013}, they extend their work to the case of discrete observations $\left\{X_{i\Delta} \right\}_{i\in \mathds{N}}$.  They also prove its asymptotic distribution when $\Delta$ goes to zero. \cite{Fissler2017} consider testing the maximal rank of the volatility process for a continuous diffusion observed with noise, using a pre-averaging approach with weighted averages of process increments that eliminate the influence of noise. \cite{Reiss2021} extend their work to time-varying covariance matrices, again in an asymptotic setting. 

In all these cases, the distribution of the test statistic is not explicit and only asymptotic distributions have been obtained by applying asymptotic  convergence theorems when $\Delta$ goes to zero. 

As already mentioned, our framework is  different: we assume that the time step $\Delta$ is fixed, which places us in a non-asymptotic setting. 
So we want to construct a test procedure that guarantees a given level $\alpha$ in the non-asymptotic setting with  $\Delta$ and $n$ fixed. This is a major difference with the works cited above. 
Although statistical tests reveal good properties in the asymptotic setting,   they are generally difficult to apply in a non-asymptotic setting.  In some examples, even if the rank of $\Sigma\Sigma^T $ is strictly less than $d$, the corresponding empirical covariance matrix may be numerically full rank, i.e. in the non-asymptotic setting. This problem is circumvented in the asymptotic setting in \cite{Jacod2013} by adding a random perturbation and studying the convergence of determinant ratio statistics. But if we want to  work in the non-asymptotic setting, we  need to use other estimators and probabilistic tools. 

We have chosen to test the determinant of $\Sigma\Sigma^t$ rather than the rank. The test statistic is therefore the determinant of the diffusion increments matrix. In the asymptotic case, the influence of the drift is negligible, since it is of order $O(\Delta)$. In the non-asymptotic case,  drift must be taken into account. We therefore propose to center the statistics  {by removing the drift.} 
We then study the distribution of the test statistic. Under the assumption that the drift does not depend on $X_t$ itself (model \eqref{eq:main_SDE}), the increments are independent. This makes  possible to derive the analytic distribution  of the statistic in some simple cases, and in other cases to prove lower and upper bounds of the distribution using concentration inequalities.
 {The case of a drift depending on $X_t$ is also discussed. However in this case, the statistics is only approximately centered and its distribution is much harder to access, even if we derive bounds for the critical value.}

Our first main contribution is to construct  procedures for testing $H_0$ versus $H_1$ that  satisfy non-asymptotic performance properties. In particular, we propose a choice of   critical values based  either on the explicit distribution of the test statistic (for one-dimensional SDE with known drift) or on the lower  bounds of the test statistic  {(for two-dimensional SDE)}.  In particular, for each $\alpha$ in $[0,1]$, these tests are of level $\alpha$, i.e. they have a probability of Type I error 
at most equal to $\alpha$. For particular models, they are even of size $\alpha$, the probability of Type I error being exactly $\alpha$ since they are based on the exact non-asymptotic distribution of the test statistic. 
 
 Our second main contribution consists in deriving  non-asymptotic conditions on the alternative hypothesis which guarantee that the probability of Type II error is at most equal to a prescribed constant $\beta$. This can be done for one-dimension SDE  with necessary and sufficient conditions, when the drift is fully known or even known up to a linear parameter. For two-dimension  SDE, the distribution is not exact and we  use  concentration inequalities to prove upper bounds on the test statistic. The separability condition can then be deduced. When the drift  parameter is unknown, the test procedure is adapted. However, power  deteriorates slightly when the parameter is estimated on the first half of the sample.  For a dimension greater than 2, this is much more difficult, and we are unable to prove the lower and upper bounds of the test statistics. Instead, we propose an approach based on multiple one-dimensional tests and prove that we control the level of the overall procedure. This procedure gives very good results in practice.

This paper is organized as follows. First, we consider the case of a one-dimensional diffusion process  in Section \ref{section:1d_process}. We calculate the exact distribution for the non-centered and centered statistics,  then deduce the critical value and study conditions  to control the Type II error.  We show that, from a non-asymptotic point of view, the centering of the test statistics has a considerable influence on the test separation rates.  We also extend thus result to the case of unknown drift  {and discuss the case of a drift depending on the process itself}. In Section 
 \ref{section:2d_process}, we deal with a two-dimensional process with known drift. We consider the center statistic and prove  the lower and upper bounds of its distribution. We then propose   critical values and conditions such that   Type I and II errors are controlled.  Section \ref{sec:multiple_tests} presents the multiple testing approach. 
  Next, Section \ref{section:numerical_experiments} presents   a numerical study to illustrate the properties of the testing procedure on different SDEs. We conclude with a discussion and perspective.

 \section{Test for a one-dimensional SDE}\label{section:1d_process}
We start with a simple one-dimensional Brownian motion with drift  {depending on time}:
\begin{equation}\label{eq:1d_SDE}
dX_t = b_t dt + \sigma dW_t, \quad X_0 = x_0, \quad t>0,
\end{equation}
where $b_t: \mathds{R}\to\mathds{R}$ is the drift function that depends only on time $t$, $\sigma \in \mathds{R}$ is a constant diffusion coefficient and $W$ is a one-dimensional Brownian motion.  {The case of a drift $b$ depending on $X_t$ is discussed at the end of Section \ref{section:1d_process}.}

 {We assume that the} process $(X_t)_{t\geq 0}$ is discretely observed on a time interval $[0,T]$ at equidistant time step $\Delta$, $t_0=0, t_1=\Delta, \ldots, t_n=n\Delta=T$. 
Our aim is to construct a statistical test to decide between the  two following hypotheses:
\begin{align*}
H_0: \sigma^2 = \sigma^2_0 \quad \mbox{versus} \quad 
H_1: \sigma^2 >\sigma^2_0 , 
\end{align*}
where $\sigma^2_0$ is a pre-chosen positive constant.

In Section \ref{sec:noncentered}, we consider an exact testing procedure by calculating the   exact  distribution of the test statistic.  We then introduce a centered version of the test statistic in Section \ref{sec:centereddriftknown}. We deal with the case where the drift is unknown and estimated in Section \ref{sec:centereddriftunknown}.  {Finally, Section \ref{sec:1D_Drift_depend_Xt}
introduces an approximation of the statistics for the case of a drift $b$ depending on the process $X_t$ itself.}

For each test, we present the test statistic and its exact distribution. We then construct the test by calculating the critical value  that control {s}  the Type I error. Finally, we study the Type II error of the test by deriving non-asymptotic and optimal conditions on the alternative hypothesis.
We will use the notations  {$\mathds{P}_{\sigma_0^2}$ and $\mathds{P}_{\sigma^2}$} to distinguish the probability under the null  or the alternative hypothesis.

\paragraph{Notations} In the following, we denote $\mathcal{N}(\mu, \omega^2)$ a normal distribution with mean $\mu$ and variance $\omega^2$, $\chi^2_n(0)$ a chi-squared random distribution with $n$ degrees of freedom, $\chi^2_n(\lambda)$ a chi-squared random distribution with $n$ degrees of freedom and a non-centrality parameter $\lambda$. Let us also denote the quantiles $q_{\mathcal{N},\beta}$, $q_{\chi^2_n, \beta}$ and $q_{\chi^2_n(\lambda),\beta}$ of order $\beta$ of the distributions $\mathcal{N}(0, 1)$, $\chi^2_n(0)$ and $\chi^2_n(\lambda)$, respectively. Further, the symbol "$\sim$" is used throughout the paper as an alias for "follows a certain probability distribution".

\subsection{Non-centered statistics}\label{sec:noncentered}
We consider  the normalized increments of   process $X$ defined as:
\begin{equation}\label{eq:xi}
 \xi_i := \frac{X_{i\Delta} - X_{(i-1)\Delta}}{\sqrt\Delta}, \quad i = 1,\dots, n.
\end{equation}
Let $\xi=(\xi_1, \ldots, \xi_n)$. Note that the $\{\xi_i\}$ are independent in $i$, since the increments do not overlap.
We then define the test statistic:
\begin{equation}\label{eq:S_stat1d}
S = \frac{1}{n}\sum_{i=1}^n \xi^2_i=\frac1n\|\xi\|^2.
\end{equation} 
%
We  calculate the distribution of $\xi_i$,  $\|\xi\|^2$ and $S$ in the next lemma:
\begin{lemma}\label{lemma:properties_xi}
Let $ \xi_i$ be the random variables defined by (\ref{eq:xi}). We have
\begin{enumerate}
    \item $\xi_i \sim \mathcal{N}\left(\frac{\int_{(i-1)\Delta}^{i\Delta} b_s ds}{\sqrt{\Delta}}, \sigma^2\right)$.
    \item $\left\|\xi\right\|^2\sim\sigma^2\chi^2_n(\lambda(\sigma))$, 
    with a non-centrality parameter $\lambda(\sigma)$ equal to:
\[
\lambda(\sigma) = \frac{\sum_{i=1}^n\left(\int_{(i-1)\Delta}^{i\Delta} b_s ds\right)^2}{\sigma^2\Delta}.
\]
\item $S\sim\frac{\sigma^2}{n}\chi^2_n(\lambda(\sigma))$. Its cumulative distribution function is $\forall t>0$
\[
\mathds{P}_{\sigma^2}\left(S \leq t \right) = 1 - Q_{n/2}\left(\sqrt{\lambda(\sigma)}, \sqrt\frac{nt}{\sigma^2} \right),
\]
where $Q_m(u,v)$ is a Markum Q-function, defined as:
\begin{eqnarray}\label{eq:markum_q_function}
Q_m(u,v)
&=&\exp \left( -\frac{u^2 + v^2}{2} \right) \sum_{k=1-m}^\infty \left( \frac{u}{v}\right)^k I_k(uv),
\end{eqnarray}
where $I_{k}$ is a modified Bessel function of the first kind of order $k$. 
\end{enumerate}
\end{lemma}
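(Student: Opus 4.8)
The plan is to establish the three claims in sequence, each reducing to a standard property of Gaussian increments and (non-central) chi-squared laws; no concentration arguments are needed at this stage.

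For the first claim, I would integrate the SDE \eqref{eq:1d_SDE} over a single mesh interval to get
\[
X_{i\Delta}-X_{(i-1)\Delta} = \int_{(i-1)\Delta}^{i\Delta} b_s\,ds + \sigma\bigl(W_{i\Delta}-W_{(i-1)\Delta}\bigr),
\]
observe that the drift integral is a deterministic number because $b$ depends on time only, recall that $W_{i\Delta}-W_{(i-1)\Delta}\sim\mathcal{N}(0,\Delta)$, and divide by $\sqrt\Delta$. This gives $\xi_i\sim\mathcal{N}\bigl(\Delta^{-1/2}\int_{(i-1)\Delta}^{i\Delta}b_s\,ds,\ \sigma^2\bigr)$; independence across $i$ is the disjoint-increments property of Brownian motion already noted after \eqref{eq:S_stat1d}.

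For the second claim, set $\mu_i:=\Delta^{-1/2}\int_{(i-1)\Delta}^{i\Delta}b_s\,ds$. Then $\xi_i/\sigma$ are independent $\mathcal{N}(\mu_i/\sigma,1)$ variables, so by the very definition of the non-central chi-squared distribution $\sum_{i=1}^n(\xi_i/\sigma)^2\sim\chi^2_n(\lambda(\sigma))$ with $\lambda(\sigma)=\sum_{i=1}^n(\mu_i/\sigma)^2$; substituting $\mu_i$ gives exactly the stated formula, and multiplying through by $\sigma^2$ yields $\|\xi\|^2\sim\sigma^2\chi^2_n(\lambda(\sigma))$. The distributional part of the third claim is then immediate from $S=\|\xi\|^2/n$ by scaling, and for $t>0$
\[
\mathds{P}_{\sigma^2}(S\le t)=\mathds{P}\Bigl(\chi^2_n(\lambda(\sigma))\le \tfrac{nt}{\sigma^2}\Bigr).
\]
It then remains to rewrite this using the Markum $Q$-function. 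I would invoke (or re-derive) the classical identity $\mathds{P}\bigl(\chi^2_n(\lambda)\le x\bigr)=1-Q_{n/2}\bigl(\sqrt\lambda,\sqrt x\bigr)$: writing the density of $\chi^2_n(\lambda)$ as a $\mathrm{Poisson}(\lambda/2)$ mixture of central $\chi^2_{n+2j}$ densities, the survival function becomes a Poisson-weighted sum of regularized upper incomplete gamma functions, which matches the series \eqref{eq:markum_q_function} once $I_k(uv)$ is expanded as $\sum_{j}(uv/2)^{2j+k}/\bigl(j!\,(j+k)!\bigr)$ and one sets $u=\sqrt\lambda$, $v=\sqrt x$, $m=n/2$. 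Taking $x=nt/\sigma^2$ finishes the proof.

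The only step that is not pure bookkeeping is this last identification, i.e. reconciling the Poisson–gamma form of the non-central $\chi^2$ tail with the particular Bessel-series form of $Q_m(u,v)$ (with index running from $1-m$); in practice one may simply cite a standard reference for $\mathds{P}(\chi^2_n(\lambda)\le x)=1-Q_{n/2}(\sqrt\lambda,\sqrt x)$ rather than expand the series by hand.
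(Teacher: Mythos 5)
Your proof is correct, and it is exactly the standard argument the paper relies on: the paper states Lemma \ref{lemma:properties_xi} without proof, treating the Gaussianity of the normalized increments, the definition of the non-central chi-squared law, and the classical identity $\mathds{P}(\chi^2_n(\lambda)\le x)=1-Q_{n/2}(\sqrt{\lambda},\sqrt{x})$ as standard facts. Your sketch fills these in correctly (including the acknowledged option of citing a reference for the Marcum-$Q$ identity rather than expanding the Bessel series), so there is nothing to add.
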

 \begin{remark}
 \begin{enumerate}
\item If the function $b_s$ is  constant, the non-centrality parameter $\lambda(\sigma) = n\Delta b^2/\sigma^2$ is of order $O(n\Delta)$.  In the asymptotic setting $T$ fixed, it is a constant. In the asymptotic setting $\Delta$ fixed and $n\rightarrow\infty$, it converges to $\infty$.  
\item Note that   expression (\ref{eq:markum_q_function})   is not explicit,
even though several packages or approximations exist \citep{Gil2014}. 
We will show in the next section that centering the statistic gives results that are easier to use.
 \end{enumerate}
\end{remark} 
 The following proposition directly follows   Lemma \ref{lemma:properties_xi}: 
 
 \begin{prop}\label{prop:separability}[1d-Test with noncentered statistics]
 Let $\alpha\in]0;1[$ be a fixed constant. Let $S$ be the test statistic defined by \eqref{eq:S_stat1d}  and let us define the test $\Upsilon$ which rejects $H_0$ if  
 
 $$S \geq z_{1-\alpha}=:\frac{\sigma^2_0}n q_{\chi^2_n(\lambda(\sigma_0)), 1-\alpha}.$$
Then, the test $\Upsilon$ is of Type I error $\alpha$ and therefore it is of level $\alpha$. 

Further, let $\beta \in]0;1[$ be a  constant such that $1-\beta\geq \alpha$. For all $ \sigma^2>0$ such that
\begin{equation}\label{eq:1DnoncenteredCondition}
\sigma^2 \geq \sigma_0^2 \frac{q_{\chi^2_n(\lambda(\sigma_0)), 1-\alpha}}{q_{\chi^2_n(\lambda(\sigma)), \beta}},
\end{equation}
the test $\Upsilon$ satisfies
\[
\mathds{P}_{\sigma^2} \left( \Upsilon \text{ accepts } H_0 \right) \leq \beta.
\]
Condition \eqref{eq:1DnoncenteredCondition} is sufficient and necessary. 
 \end{prop}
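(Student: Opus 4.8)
The plan is to use directly the distributional identity $S\sim \frac{\sigma^2}{n}\chi^2_n(\lambda(\sigma))$ established in Lemma \ref{lemma:properties_xi}. First I would treat the Type I error: under $H_0$ we have $\sigma^2=\sigma_0^2$, so $S\sim \frac{\sigma_0^2}{n}\chi^2_n(\lambda(\sigma_0))$, hence $\frac{nS}{\sigma_0^2}\sim\chi^2_n(\lambda(\sigma_0))$, and by the very definition of the quantile,
\[
\mathds{P}_{\sigma_0^2}(S\geq z_{1-\alpha})=\mathds{P}_{\sigma_0^2}\!\left(\frac{nS}{\sigma_0^2}\geq q_{\chi^2_n(\lambda(\sigma_0)),1-\alpha}\right)=\alpha,
\]
since the $\chi^2_n(\lambda)$ distribution is continuous (so the quantile is attained exactly). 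This gives Type I error exactly $\alpha$, hence level $\alpha$.

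Next I would handle the Type II error. Under the alternative with variance $\sigma^2$, $\frac{nS}{\sigma^2}\sim\chi^2_n(\lambda(\sigma))$, so
\[
\mathds{P}_{\sigma^2}(\Upsilon \text{ accepts } H_0)=\mathds{P}_{\sigma^2}(S< z_{1-\alpha})=\mathds{P}_{\sigma^2}\!\left(\frac{nS}{\sigma^2}<\frac{n z_{1-\alpha}}{\sigma^2}\right)=F_{\chi^2_n(\lambda(\sigma))}\!\left(\frac{n z_{1-\alpha}}{\sigma^2}\right),
\]
where $F_{\chi^2_n(\lambda(\sigma))}$ is the (continuous, strictly increasing on its support) cdf of $\chi^2_n(\lambda(\sigma))$. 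Plugging in $n z_{1-\alpha}=\sigma_0^2\, q_{\chi^2_n(\lambda(\sigma_0)),1-\alpha}$, this probability is $\leq\beta$ if and only if $\frac{\sigma_0^2}{\sigma^2}q_{\chi^2_n(\lambda(\sigma_0)),1-\alpha}\leq q_{\chi^2_n(\lambda(\sigma)),\beta}$, which rearranges exactly to condition \eqref{eq:1DnoncenteredCondition}. Because $F_{\chi^2_n(\lambda(\sigma))}$ is strictly increasing, the equivalence is genuine, which is what yields that the condition is both sufficient and necessary.

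One subtlety I would be careful about: the non-centrality parameter $\lambda(\sigma)=\frac{\sum_i (\int b_s ds)^2}{\sigma^2\Delta}$ itself depends on $\sigma$, so both sides of \eqref{eq:1DnoncenteredCondition} involve $\sigma$ and the inequality is implicit rather than an explicit threshold on $\sigma^2$; the statement is nonetheless a clean iff characterization of the set of alternatives against which the prescribed power $1-\beta$ is reached. I would also note where $1-\beta\geq\alpha$ is used: it guarantees the quantile $q_{\chi^2_n(\lambda(\sigma)),\beta}$ is no larger than the relevant $1-\alpha$ quantile in the degenerate comparison, ensuring the set of admissible $\sigma^2$ described by \eqref{eq:1DnoncenteredCondition} is nonempty and consistent (for $\sigma^2=\sigma_0^2$ one cannot have power exceeding $1-\alpha<\ldots$, so $\beta\leq 1-\alpha$ is the natural range). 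The only real work is bookkeeping with the monotone cdf; there is no genuine obstacle here since everything reduces to the exact law from Lemma \ref{lemma:properties_xi} and monotonicity of the non-central chi-square cdf.
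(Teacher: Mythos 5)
Your proof is correct and follows essentially the same route as the paper: both reduce everything to the exact non-central chi-square law of $S$ from Lemma \ref{lemma:properties_xi} and rewrite the Type I and Type II error probabilities via that distribution, obtaining the condition by comparing $\frac{\sigma_0^2}{\sigma^2}q_{\chi^2_n(\lambda(\sigma_0)),1-\alpha}$ with $q_{\chi^2_n(\lambda(\sigma)),\beta}$. If anything, you are slightly more explicit than the paper about necessity (via strict monotonicity of the non-central chi-square cdf), which the paper's proof only asserts implicitly.
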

 
 \begin{proof}
 Since $S$ is distributed according to a non-centered chi-squared distribution, it is straightforward to obtain
 $$\mathds{P}_{\sigma_0^2} \left(S\geq \frac{\sigma_0^2}{n} q_{\chi^2_n(\lambda(\sigma_0)), 1-\alpha} \right)=\alpha. $$
 For the Type II error, we have
 $$ \mathds{P}\left(S \leq z_{1-\alpha}  \right) = \mathds{P}\left( \chi_n^2(\lambda(\sigma)) \leq \frac{\sigma_0^2}{\sigma^2} q_{\chi^2_n(\lambda(\sigma_0)), 1-\alpha}  \right). 
 $$
 It implies that $\mathds{P}_{\sigma^2}\left(S \leq z_{1-\alpha} \right) \leq \beta$ as soon as $\frac{\sigma_0^2}{\sigma^2} q_{\chi^2_n(\lambda(\sigma_0)), 1-\alpha}\leq q_{\chi^2_n(\lambda(\sigma)), \beta}$. Type II error is bounded by a $\beta$ when \eqref{eq:1DnoncenteredCondition} holds. 
 \end{proof}
  \medskip
  
We want to understand the influence of  $n$ and $\Delta$  on the threshold $z_{1-\alpha}$ 
and  the separability condition \eqref{eq:1DnoncenteredCondition}. However, they are implicitly defined as they depend on $\sigma^2_0$ and $\sigma^2$ via the non-centrality parameters $\lambda(\sigma_0)$ and $\lambda(\sigma)$. In what follows, we consider the simplified case of a constant drift $b$, provide a quantile approximation to detail the effect of $n$, $\Delta$ and deduce more intuitive conditions on $\sigma$.

In the following, $\square$ denotes a positive quantity that is upper and lower bounded by  positive constants. Its value can change from line to  {line} and even within the same equation. In the same spirit, $\square_\beta$ designates a quantity that is upper and lower bounded by positive functions of  $\beta$.

Thanks to Lemma \ref{quant_bound} in the appendix, we have   for $\alpha<1/\sqrt{2\pi}$,
$$n-1+\lambda(\sigma_0)+\log(1/\alpha)\leq q_{\chi^2_n(\lambda(\sigma_0)), 1-\alpha} \leq n + \square \sqrt{n\log(1/\alpha)}+ \square{\log(1/\alpha)}+\square \lambda(\sigma_0).$$
So   the critical value satisfies
$$ \sigma_0^2\frac{n-1}{n}+ \square \sigma_0^2\frac{\log(1/\alpha)}{n}+\square \Delta b^2 \leq z_{1-\alpha} \leq \sigma_0^2+ \square \sigma_0^2 \frac{\sqrt{\log(1/\alpha)}}{\sqrt{n}}+\square \sigma_0^2\frac{\log(1/\alpha)}{n}+\square \Delta b^2.$$

Let us now describe the behavior for the two asymptotic settings:
\begin{enumerate}
    \item   $T$ fixed, $n\rightarrow \infty$ and $\Delta =T/n \rightarrow 0$. With the previous  {inequality}, we know that the critical value $z_{1-\alpha}\underset{{n\to \infty}}{\longrightarrow}  {\sigma_0^2}$.
    \item $\Delta$ fixed, $n\rightarrow \infty$ and $T=\Delta n\rightarrow \infty$.   Then the critical value does not converge towards $\sigma_0^2$. There is a bias of the order of $\Delta b^2$ up to a multiplicative constant.
 \end{enumerate}

 \paragraph{Study on the separability condition  (\ref{eq:1DnoncenteredCondition})} 
We have shown 
that the Type II error is less than $\beta$ if and only if
$$\sigma^2 \geq \bar\sigma_{\alpha,\beta}^2=\sigma_0^2 \frac{q_{\chi^2_n(\lambda(\sigma_0)), 1-\alpha}}{q_{\chi^2_n(\lambda(\sigma)), \beta}}.$$
We can approximate this bound thanks to Lemma \ref{quant_bound} for $\alpha<1/\sqrt{2\pi}$ and $\beta<0.5$. On one hand
$$\bar\sigma_{\alpha,\beta}^2 \leq \sigma_0^2 \frac{n + \square \sqrt{\left(n+\frac{n\Delta b^2}{\sigma_0^2}\right)\log(1/\alpha)}+ \square{\log(1/\alpha)}+\frac{n \Delta b^2}{\sigma_0^2}}{n+\frac{n\Delta b^2}{\sigma^2}-\square_\beta\sqrt{n}-\square_\beta\sqrt{\frac{n\Delta b^2}{\sigma_0^2}}}.$$
On the other hand
$$\bar\sigma_{\alpha,\beta}^2 \geq \sigma_0^2 \frac{n-1 + \log(1/\alpha)+ \frac{n \Delta b^2}{\sigma_0^2}}{n+\frac{n\Delta b^2}{\sigma^2}+\square\sqrt{n}}.$$
By introducing $u=1+\frac{\Delta b^2}{\sigma^2}$ and $u_0=1+\frac{\Delta b^2}{\sigma_0^2}$, we get that Equation \eqref{eq:1DnoncenteredCondition} is therefore implied by
$$\sigma^2 u \geq \sigma_0^2 u_0 \frac{1+\square_\alpha (nu_0)^{-1/2}}{1-\square_\beta n^{-1/2} (1+\sqrt{u_0-1})u^{-1}}.$$
But $u\geq 1$ hence $u^{-1}\leq 1$ and since we are under $H_1: \sigma^2\geq \sigma_0^2$, we have $u_0\geq u$. Hence  \eqref{eq:1DnoncenteredCondition} is implied by
$$\sigma^2 u \geq \sigma_0^2 u_0 \frac{1+\square_\alpha (nu_0)^{-1/2}}{1-\square_\beta n^{-1/2} (1+\sqrt{\frac{\Delta b^2}{\sigma_0^2}})}.$$
This is equivalent to
$$\sigma^2 + \Delta b^2 \geq (\sigma_0^2 + \Delta b^2) \left(1 + \frac{\square_{\alpha,\beta}}{\sqrt{n}} \left[\frac{\sigma_0}{\sqrt{\sigma_0^2 + \Delta b^2}}+ 1+ \sqrt{\frac{\Delta b^2}{\sigma_0^2}} \right]\right)$$
or finally to
$$\sigma^2  \geq \sigma_0^2 + \frac{\square_{\alpha,\beta}}{\sqrt{n}} \left[\sigma_0\sqrt{\sigma_0^2 +\Delta b^2} +(\Delta b^2 +\sigma_0^2) \left(1 +  \sqrt{\frac{\Delta b^2}{\sigma_0^2}}\right) \right].$$
This is a sufficient condition for having a Type II error of  value $\beta$. We are losing   the necessary condition because of the lower bound   on the chi-squared quantile of the numerator, where the term in $\sqrt{n}$ disappears. This is why we cannot prove that it is a sufficient and necessary condition up to a constant. However we believe it is the true rate in the sense that  the Gaussian concentration inequality has been proved recently to be tight for two-sided quantiles of convex Lipschitz functions \citep{valettas}. We have just not been able to pass from two-sided to one sided bounds.
Let us now describe the behavior for the two asymptotic settings:
\begin{enumerate}
    \item $T$ fixed, $n\rightarrow \infty$ and $\Delta=T/n \rightarrow 0$. We have $\Delta = T/n$.  We recover a rate (at least for the upper bound) equal to  $\sigma_0^2 (1+\frac{\square_{\alpha,\beta}}{\sqrt{n}})$.  
\item $\Delta$ fixed, $n\rightarrow \infty$ and $T=\Delta n\rightarrow \infty$.  In this case the limit deteriorates and converges at the same $\sqrt{n}$ rate but with a multiplicative constant that worsens for large $\Delta$. If $\Delta b^2/\sigma_0^2\geq 1$, the upper bound  is at least in $\sigma_0^2 (1+\frac{\square_{\alpha,\beta}}{\sqrt{n}} \frac{\Delta b^2}{\sigma_0^2})$ and up to $\sigma_0^2 \left(1+\frac{\square_{\alpha,\beta}}{\sqrt{n}} \left(\frac{\Delta b^2}{\sigma_0^2}\right)^{3/2}\right)$, depending on whether one is looking only at the numerator or if we take into account the concentration of the denominator in Equation \eqref{eq:1DnoncenteredCondition}. In both cases, it means that the multiplicative factor is increasing with $\Delta$
and we loose the $\sqrt{n}$ rate of separability of the two conditions when $\Delta$ is "large". 
 \end{enumerate}

\subsection{Centered statistics with known drift}\label{sec:centereddriftknown}

In this section, we propose a new statistic to remove the dependency on the drift and avoid the  rate lost in the separability condition. To do so, we introduce a centered test statistic. 
%
For $i=1,\ldots,n$, let us denote
\begin{equation}\label{eq:modified_statistics}
\dot\xi_i = \xi_i - \frac{1}{\sqrt{\Delta}}\int_{(i-1)\Delta}^{i\Delta} b_{s}ds = \frac{X_{i\Delta}-X_{(i-1)\Delta} - \int_{(i-1)\Delta}^{i\Delta} b_{s}ds}{\sqrt{\Delta}},
\end{equation}
such that $\dot\xi_i\sim\mathcal{N}(0, \sigma^2)$. 
Then, we define the statistics   $\dot{S}$ as follows:
\begin{equation}\label{eq:Sdot_stat1d}
\dot{S} = \frac{1}{n}\sum_{i=1}^n \dot{\xi}^2_i.
\end{equation} 
Note that  $\dot{S}$ follows a rescaled centered chi-squared distribution with $n$ degrees of freedom
$$
\dot{S} \sim \frac{\sigma^2}{n} \chi^2_n(0).
$$ 

\begin{prop}[1d-Test with centered statistics and known drift]\label{prop:centeredstat_driftknown}
Let $\alpha\in]0;1[$ be a fixed constant. Let $\dot{S}$ be the statistic  defined in \eqref{eq:Sdot_stat1d} and let us define the test $\dot{\Upsilon}$ which rejects $H_0$ if
\begin{equation}
\label{eq:alpha_quantile}
\dot{S}\geq \dot{z}_{1-\alpha}=:\frac{\sigma_0^2}{n} q_{\chi^2_n,1-\alpha}.
\end{equation}
Then, the test $\dot{\Upsilon}$ is of Type I error $\alpha$ and therefore it is of level $\alpha$.

Let $\beta \in ]0;1[$ be a  constant such that $1-\beta\geq\alpha$. For all $\sigma^2$ such that
\begin{equation}\label{eq:d1_Type2_error}
\sigma^2 \geq \frac{q_{\chi^2_n,1-\alpha}}{q_{\chi^2_n,\beta}}\sigma_0^2,
\end{equation}
the test $\dot{\Upsilon}$ satisfies
\[
\mathds{P}_{\sigma^2} \left( \dot{\Upsilon} \text{ accepts } H_0 \right) \leq \beta.
\]
It is again a necessary and sufficient condition.
\end{prop}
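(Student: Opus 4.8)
The plan is to mirror exactly the argument of Proposition \ref{prop:separability}, but now working with the \emph{central} chi-squared distribution, which makes everything cleaner. First I would establish the Type I error: under $H_0$ we have $\dot S \sim \frac{\sigma_0^2}{n}\chi^2_n(0)$, so
\[
\mathds{P}_{\sigma_0^2}\left(\dot S \geq \frac{\sigma_0^2}{n}q_{\chi^2_n,1-\alpha}\right)
= \mathds{P}\left(\chi^2_n(0)\geq q_{\chi^2_n,1-\alpha}\right)=\alpha,
\]
which uses only the definition of the quantile $q_{\chi^2_n,1-\alpha}$ and the fact that the cdf of $\chi^2_n(0)$ is continuous and strictly increasing on its support (so the quantile is attained exactly, giving size $\alpha$, not merely level $\alpha$). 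Hence $\dot\Upsilon$ is of Type I error exactly $\alpha$.

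Next I would handle the Type II error. Under $H_1$, $\dot S \sim \frac{\sigma^2}{n}\chi^2_n(0)$ as well (since $\dot\xi_i\sim\mathcal N(0,\sigma^2)$ regardless of the drift, by \eqref{eq:modified_statistics}), so
\[
\mathds{P}_{\sigma^2}\left(\dot S \leq \dot z_{1-\alpha}\right)
= \mathds{P}\left(\chi^2_n(0)\leq \frac{\sigma_0^2}{\sigma^2}q_{\chi^2_n,1-\alpha}\right).
\]
This probability is at most $\beta$ if and only if the argument on the right does not exceed the $\beta$-quantile of $\chi^2_n(0)$, i.e.\ iff $\frac{\sigma_0^2}{\sigma^2}q_{\chi^2_n,1-\alpha}\leq q_{\chi^2_n,\beta}$, which rearranges precisely to \eqref{eq:d1_Type2_error}. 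The ``if and only if'' again rests on strict monotonicity and continuity of the $\chi^2_n(0)$ cdf, which guarantees that $\mathds{P}(\chi^2_n(0)\leq x)\leq\beta \iff x\leq q_{\chi^2_n,\beta}$; one should also note $q_{\chi^2_n,\beta}>0$ so the ratio is well defined, and that the hypothesis $1-\beta\geq\alpha$ ensures $q_{\chi^2_n,1-\alpha}\geq q_{\chi^2_n,\beta}$ so the threshold in \eqref{eq:d1_Type2_error} is $\geq\sigma_0^2$, consistent with being under $H_1$.

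There is essentially no obstacle here: unlike Proposition \ref{prop:separability}, the non-centrality parameter has vanished, so the quantiles no longer depend on $\sigma$ or $\sigma_0$ and the condition \eqref{eq:d1_Type2_error} is genuinely explicit. The only point requiring a word of care is the claim that the condition is \emph{necessary} as well as sufficient, which is why I would make the continuity/strict-monotonicity of the central $\chi^2_n$ cdf explicit rather than leaving it implicit — it is what upgrades ``$\Leftarrow$'' to ``$\Leftrightarrow$''. Everything else is a two-line computation, so I would keep the written proof as short as the proof of Proposition \ref{prop:separability}.
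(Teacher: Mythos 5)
Your proof is correct and follows essentially the same route as the paper's: distribute $\dot S$ as $\frac{\sigma^2}{n}\chi^2_n(0)$, use the quantile definition for the exact Type I error, and rescale to reduce the Type II error to a central chi-squared quantile comparison. Your added remarks on the continuity and strict monotonicity of the $\chi^2_n(0)$ cdf correctly supply the ``necessary'' direction, which the paper's proof states but leaves implicit (its argument only spells out sufficiency via ``as soon as'').
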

\begin{proof}
Since $\dot{S}$ is distributed according to the centered chi-squared law with $n$ degrees of freedom, it is straightforward to show that
\begin{equation}\label{eq:d1_Type1_error}
 \mathds{P}_{\sigma^2_0}\left(\dot{S} \geq \dot{z}_{1-\alpha}\right) = \alpha.
\end{equation}
For the power of the test, we first note that
\[
\mathds{P}_{\sigma^2}\left(\dot{S} \leq \dot{z}_{1-\alpha}\right) 
=\mathds{P}_{\sigma^2}\left( \chi^2_n(0) \leq \frac{n }{\sigma^2} \frac{\sigma_0^2}{n} q_{\chi^2_n,1-\alpha}\right),
\]
It implies that $\mathds{P}_{\sigma^2}\left(\dot{S} \leq \dot{z}_{1-\alpha} \right)  \leq \beta$ as soon as $\frac{\sigma_0^2}{\sigma^2} q_{\chi^2_n,1-\alpha}\leq q_{\chi^2_n,\beta}$. Thus Type II error is bounded by a fixed risk level $\beta \in ]0;1[$ when \eqref{eq:d1_Type2_error} holds.
\end{proof}

\paragraph{Study of the threshold $\dot z_{1-\alpha} = \frac{\sigma_0^2}{n} q_{\chi^2_n,1-\alpha}$}
We use again Lemma \ref{quant_bound} to prove that
$$\sigma_0^2 \left(1+\frac{\square_\alpha}{n}\right)\leq \dot z_{1-\alpha} \leq \sigma_0^2 \left(1+\frac{\square_\alpha}{\sqrt{n}}\right).$$

This approximation does not depend on $\Delta$, only on the sample size $n$. The order  is thus the same for the   setting
$T$ fixed, $n\rightarrow \infty, \Delta =T/n \rightarrow 0$, and the setting  $\Delta$ fixed, $n\rightarrow \infty, T=\Delta n\rightarrow \infty$. 

\paragraph{Study of condition (\ref{eq:d1_Type2_error})} 
We have
$$\sigma^2 \geq  \frac{q_{\chi^2_n,1-\alpha}}{q_{\chi^2_n,\beta}}\sigma^2_0. $$

The same study as Section \ref{sec:centereddriftknown} leads again to a discrepancy between the upper and lower bound. However if we look only at the upper bound, a necessary condition for \eqref{eq:d1_Type2_error} to hold is  
 $$\sigma^2 \geq \sigma^2_0 \left(1+\frac{\square_{\alpha,\beta}}{\sqrt{n}}\right). $$

Therefore we see here that whatever the asymptotic regime, the multiplicative constant in front of the separability rate  does not explode when $\Delta$ increases since it does not depend on it. Of course our reasoning to compare the centered and non centered procedure is purely on the upper bound. But as mentionned earlier, because of recent results in  Gaussian concentration \citep{valettas}, we believe that the upper bounds are more tight than the lower bounds, even if we have not been able to prove it. This difference between the behavior of the centered and non centered procedures for large $\Delta$ has been confirmed on simulations, see Section \ref{section:numerical_experiments}.

\subsection{Centered statistics with unknown drift \label{sec:centereddriftunknown}}

The drift is rarely known and has to be estimated from the discrete observations $\left\{X_{i\Delta}\right\}_{i=0, \ldots, n}$. 
We present in this section an adaptation of the previous test to the specific case of a parametric drift depending on a linear parameter:
\begin{equation}\label{eq:1d_SDELinear}
dX_t = \theta f_t dt + \sigma dW_t, \quad X_0 = x_0, \quad t> 0,
\end{equation}
where $\theta\in\mathbb{R}$ is an unknown scalar parameter and $f_t: \mathbb{R}\rightarrow \mathbb{R}$ is a known function. 
A standard estimator of $\theta$ is the mean square estimator: 
\begin{equation}\label{eq:thetahat}
\hat \theta =\arg\min_\theta \sum_{i=1}^{n}\left(X_{i\Delta} - X_{(i-1)\Delta} -   \theta \int_{(i-1)\Delta}^{i\Delta}f_{s}ds\right)^2.
\end{equation}

This   estimator has an explicit form and is normally distributed even when $\Delta$ is fixed. 
\begin{lemma}\label{lemma:mse_1d}
Let $\hat \theta$ be defined by (\ref{eq:thetahat}).  Then, the following holds:
\begin{itemize}
    \item[(i)] $\hat\theta = \frac{\sum_{i=1}^{n}\left(X_{i\Delta} - X_{(i-1)\Delta} \right)\int_{(i-1)\Delta}^{i\Delta}f_{s}ds}{ \sum_{i=1}^{n}\left(\int_{(i-1)\Delta}^{i\Delta}f_{s}ds\right)^2}.$
\item[(ii)]$\hat{\theta} \sim \mathcal{N}(\theta, \sigma^2_\theta)$ 
with $\sigma^2_\theta = \frac{\Delta\sigma^2}{\sum_{i=1}^{n}\left(\int_{(i-1)\Delta}^{i\Delta}f_{s}ds\right)^2}$. 
\end{itemize}
\end{lemma}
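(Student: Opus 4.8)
The plan is to treat the minimization in \eqref{eq:thetahat} as an elementary one-dimensional least squares problem, and then to exploit the exact Gaussian structure of the increments of \eqref{eq:1d_SDELinear}, which is available here for \emph{any} fixed $\Delta$ precisely because the drift does not depend on $X_t$. Throughout, abbreviate $A_i := \int_{(i-1)\Delta}^{i\Delta} f_s\,ds$ and $Y_i := X_{i\Delta}-X_{(i-1)\Delta}$.

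For (i), the objective $\theta \mapsto \sum_{i=1}^n (Y_i - \theta A_i)^2$ is a convex quadratic in $\theta$, and it is strictly convex as soon as $\sum_{i=1}^n A_i^2 > 0$, which we assume (otherwise $\theta$ is simply not identifiable from the data). Setting the derivative in $\theta$ to zero gives the normal equation $\sum_{i=1}^n A_i(Y_i-\theta A_i)=0$, whose unique solution is $\hat\theta = \big(\sum_{i=1}^n A_i Y_i\big)\big/\big(\sum_{i=1}^n A_i^2\big)$, i.e. exactly the claimed closed form.

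For (ii), integrating \eqref{eq:1d_SDELinear} over $[(i-1)\Delta, i\Delta]$ yields $Y_i = \theta A_i + \sigma\,(W_{i\Delta}-W_{(i-1)\Delta})$. Substituting into the formula from (i) gives
\[
\hat\theta \;=\; \theta \;+\; \frac{\sigma \sum_{i=1}^n A_i\,(W_{i\Delta}-W_{(i-1)\Delta})}{\sum_{i=1}^n A_i^2}.
\]
Since the Brownian increments $W_{i\Delta}-W_{(i-1)\Delta}$ are independent and $\mathcal{N}(0,\Delta)$-distributed, the numerator is a deterministic linear combination of independent centered Gaussians, hence centered Gaussian with variance $\sigma^2 \Delta \sum_{i=1}^n A_i^2$. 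Dividing by the deterministic quantity $\sum_{i=1}^n A_i^2$ then yields $\hat\theta - \theta \sim \mathcal{N}\!\big(0,\ \sigma^2\Delta/\sum_{i=1}^n A_i^2\big)$, which is exactly $\mathcal{N}(\theta,\sigma^2_\theta)$ with $\sigma^2_\theta$ as stated.

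There is no genuine obstacle in this lemma; it is a direct computation. The only points worth a word of care are the non-degeneracy $\sum_{i=1}^n A_i^2>0$ required for $\hat\theta$ to be well defined, and the observation that what is used here is the \emph{exact} normality of the increments (not an asymptotic approximation), which is what ultimately makes the subsequent non-asymptotic analysis possible and which relies crucially on the drift in \eqref{eq:1d_SDELinear} being independent of $X_t$.
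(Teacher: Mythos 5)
Your proof is correct and follows essentially the same route as the paper: both exploit the exact Gaussian law $X_{i\Delta}-X_{(i-1)\Delta}\sim\mathcal{N}\bigl(\theta\int_{(i-1)\Delta}^{i\Delta}f_s\,ds,\ \Delta\sigma^2\bigr)$ and the independence of increments to see $\hat\theta$ as a linear combination of independent Gaussians with the stated mean and variance. Your explicit decomposition $\hat\theta=\theta+\sigma\sum_i A_i(W_{i\Delta}-W_{(i-1)\Delta})/\sum_i A_i^2$ and the remark on the non-degeneracy condition $\sum_i A_i^2>0$ are just slightly more detailed versions of the paper's argument.
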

\noindent Proof is given in Appendix. 
 
 Now, let $\hat{\xi}_i$ be the increments centered around the estimated  drift:
\begin{equation}\label{eq:xicenteredestimated}
\hat{\xi}_i = \xi_i - \frac{\hat{\theta}}{\sqrt{\Delta}} \int_{(i-1)\Delta}^{i\Delta}f_{s}ds =  \frac{X_{i\Delta}-X_{(i-1)\Delta}-\hat{\theta} \int_{(i-1)\Delta}^{i\Delta}f_{s}ds}{\sqrt{\Delta}}.
\end{equation}
We study the distribution of the vector $\hat\xi=(\hat\xi_1, \ldots,\hat\xi_n)$.  
\begin{lemma}\label{lemma:S_hat_dist}
Let us introduce   $i=1, \ldots, n$:
\begin{equation*}
 Z_i = \frac{1}{\sqrt{\Delta} }\int_{(i-1)\Delta}^{i\Delta}f_{s}ds,
\end{equation*}
and $Z=(Z_1, \ldots, Z_n)^t$. Let  {$H$} be   the projection matrix: 
\begin{equation*}
    H:=Z(Z^tZ)^{-1}Z^t.
\end{equation*}
Let $C$ be a matrix  such that $(C^tC)^{+} = (I-H)$, where $A^{+}$ denotes a Moore-Penrose inverse of a matrix $A$.
Then 
\begin{itemize}
\item $\hat \xi\sim\mathcal{N}(0,\sigma^2(I-H)),$
\item $\frac1{\sigma^2}\|C^t\hat \xi\|^2\sim \chi^2_{n-1}(0).$
\end{itemize}
\end{lemma}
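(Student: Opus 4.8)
The plan is to express $\hat\xi$ as a linear image of the Gaussian vector $\dot\xi$ and then read off both assertions from standard facts about Gaussian vectors and projection matrices. First I would write $\xi = \dot\xi + \theta Z$, where $\dot\xi\sim\mathcal N(0,\sigma^2 I_n)$ by \eqref{eq:modified_statistics} and the independence of the increments, and $Z$ is the deterministic vector defined in the statement. Plugging the explicit form of $\hat\theta$ from Lemma \ref{lemma:mse_1d}(i) into \eqref{eq:xicenteredestimated} gives $\hat\xi_i = \xi_i - \hat\theta Z_i$, i.e. $\hat\xi = \xi - \hat\theta Z = \xi - \frac{Z^t\xi}{Z^tZ}\,Z = (I-H)\xi$, since $\hat\theta = (Z^tZ)^{-1}Z^t\xi$ and $H = Z(Z^tZ)^{-1}Z^t$. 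Because $(I-H)Z = 0$, the deterministic drift term cancels: $\hat\xi = (I-H)(\dot\xi + \theta Z) = (I-H)\dot\xi$.

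From here the first assertion is immediate: $\hat\xi$ is a linear transform of $\dot\xi\sim\mathcal N(0,\sigma^2 I_n)$, hence $\hat\xi\sim\mathcal N\bigl(0,\sigma^2 (I-H)I_n(I-H)^t\bigr)$, and since $H$ is an orthogonal projection it is symmetric and idempotent, so $(I-H)(I-H)^t = (I-H)^2 = I-H$. This gives $\hat\xi\sim\mathcal N(0,\sigma^2(I-H))$.

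For the second assertion, consider $\frac1{\sigma^2}\|C^t\hat\xi\|^2 = \frac1{\sigma^2}\hat\xi^t C C^t\hat\xi = \frac1{\sigma^2}\dot\xi^t(I-H)CC^t(I-H)\dot\xi$. I would argue as follows. Write $V := \frac1\sigma\dot\xi\sim\mathcal N(0,I_n)$, so the quantity equals $V^t M V$ with $M := (I-H)CC^t(I-H)$. A quadratic form $V^tMV$ in a standard Gaussian vector follows $\chi^2_r(0)$ precisely when $M$ is a symmetric idempotent matrix of rank $r$. So the key step is to check that $M$ is symmetric idempotent of rank $n-1$. Symmetry is clear. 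For idempotence, the defining relation $(C^tC)^+ = I-H$ has to be used together with the fact that $I-H$ is itself an orthogonal projection (of rank $n-1$, since $H$ has rank $1$ as $Z\neq 0$). The point is that the Moore–Penrose inverse of an orthogonal projection is the projection itself, so $(C^tC)^+ = I-H$ forces $C^tC$ to be the orthogonal projection $I-H$ as well, i.e. $C^tC = I-H$. Then $CC^t$ and $I-H$ share range, $(I-H)$ acts as the identity on that range, and one computes $(I-H)CC^t(I-H) = CC^t(C^tC)C(C^tC)^{-}\cdots$; more cleanly, since $C^tC = I-H$ is a projection, $CC^t$ is also a projection (its nonzero eigenvalues are those of $C^tC$, all equal to $1$) of the same rank $n-1$, and $(I-H)$ fixes $\mathrm{Range}(CC^t) = \mathrm{Range}(C)\subseteq \mathrm{Range}(I-H)$, so $M = CC^t$, which is idempotent of rank $n-1$. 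Hence $V^tMV\sim\chi^2_{n-1}(0)$, which is exactly the claim.

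The main obstacle is the linear-algebra bookkeeping around the matrix $C$: one has to be careful that the hypothesis $(C^tC)^+ = I-H$ (rather than $C^tC = I-H$) is what is given, and to extract from it that $CC^t$ is an orthogonal projection of rank $n-1$ whose range sits inside $\mathrm{Range}(I-H)$, so that the outer factors $(I-H)$ are absorbed. Everything else — the cancellation of the drift via $(I-H)Z=0$, the covariance computation, and the $\chi^2$ characterization of Gaussian quadratic forms — is routine. I would also note in passing that $n\ge 2$ is implicitly required for the rank-$(n-1)$ statement to be meaningful and for $C$ to exist as a nontrivial matrix.
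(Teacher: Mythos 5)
Your first bullet and the overall structure match the paper's proof: the paper writes the regression form $Y=Z\theta+\varepsilon$, gets $\hat\xi=(I-H)Y=(I-H)\varepsilon$ and reads off the covariance $\sigma^2(I-H)$, exactly as you do with $\dot\xi$ in place of $\varepsilon$. Your observation that $(C^tC)^+=I-H$ forces $C^tC=I-H$ (Moore--Penrose inversion is an involution and an orthogonal projection is its own pseudo-inverse) is correct and is actually more explicit than the paper, which only records $\mathrm{rank}(C^tC)=n-1$ and says ``we can deduce the last point''.

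The gap is in your absorption step for the second bullet. From $C^tC=I-H$ you infer $\mathrm{Range}(CC^t)=\mathrm{Range}(C)\subseteq\mathrm{Range}(I-H)$ and hence $(I-H)CC^t(I-H)=CC^t$. That inclusion does not follow: $C^tC=I-H$ pins down the row space and kernel of $C$ (namely $\mathrm{Range}(C^t)=\mathrm{Range}(I-H)$ and $\ker C=\mathrm{Range}(I-H)^\perp$) but says nothing about the column space, since replacing $C$ by $QC$ with $Q$ orthogonal leaves $C^tC$ unchanged while rotating $\mathrm{Range}(C)$ arbitrarily. Concretely, take $n=2$, $Z=(0,1)^t$, so $I-H=\mathrm{diag}(1,0)$, and $C=\left(\begin{smallmatrix}0&0\\1&0\end{smallmatrix}\right)$: then $C^tC=\mathrm{diag}(1,0)=I-H$, so $(C^tC)^+=I-H$, yet $C^t\hat\xi=0$ almost surely and $\|C^t\hat\xi\|^2$ is certainly not $\sigma^2\chi^2_1$. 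So the step cannot be repaired from the stated hypothesis alone: what the hypothesis directly gives is $\|C\hat\xi\|^2=\hat\xi^tC^tC\,\hat\xi=\|\hat\xi\|^2\sim\sigma^2\chi^2_{n-1}$, whereas the claim about $\|C^t\hat\xi\|^2$ needs the extra property $CC^t=I-H$ (equivalently $\mathrm{Range}(C)\subseteq\mathrm{Range}(I-H)$), which is what the intended construction after the lemma (the SVD-based $C$, or simply the symmetric choice $C=I-H$) provides; with it the conclusion is immediate because $\|C^t\hat\xi\|^2=\hat\xi^t(I-H)\hat\xi=\varepsilon^t(I-H)\varepsilon$ and $I-H$ is an orthogonal projection of rank $n-1$. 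In short, your proof correctly identifies the delicate point but the range-inclusion claim you use to resolve it is false as stated; you should either add $CC^t=I-H$ as a hypothesis (matching the paper's actual construction) or restate the quadratic form with $C$ instead of $C^t$.
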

Proof is given in Appendix. 
In practice, as the matrix $I-H$ has rank $n-1$, we use the singular value decomposition (SVD) of $I-H$. SVD produces   two unitary matrices $U$ and $V$, and  a  diagonal matrix $D$ with $n-1$ non zero values such that  $I-H=UDV^t$. Then we take $C=UD^{-1/2}$.

We also define a new statistic:
\begin{equation}\label{eq:Shat1d}
 \tilde{S} = \frac1{n-1}\|C^t\hat\xi\|^2,
 \end{equation}
such that
$$\frac{n-1}{\sigma^2} \tilde S \sim \chi^2_{n-1}(0). $$
 
 We can now define the test procedure. 
\begin{prop}[1d-Test with centered statistics and unknown drift]\label{prop:centeredstat_driftunknown} 
Let $\alpha\in]0;1[$ be a fixed constant. Let $ \tilde{S}$ be the test statistic defined by (\ref{eq:Shat1d}) and let us define the test $\tilde{\Upsilon}$ which rejects $H_0$ if  
\begin{equation*} 
\tilde{S}\geq\hat{z}_\alpha = \frac{\sigma_0^2}{n-1} q_{\chi^2_{n-1},1-\alpha}.
\end{equation*}
Then, the test $\tilde{\Upsilon}$ is of Type I error $\alpha$ and therefore it is of level $\alpha$.
 
Let $\beta \in ]0;1[$ be a  constant  such that $1-\beta\geq\alpha$. For all $\sigma^2$ such that 
\begin{equation}\label{eq:d1_Type2_error_est}
\sigma^2 \geq \sigma^2_0 \frac{q_{\chi^2_{n-1},1-\alpha}}{q_{\chi^2_{n-1},\beta}}.
\end{equation}
the test $\tilde{\Upsilon}$ satisfies
\[
\mathds{P}_{\sigma^2} \left( \tilde{\Upsilon} \text{ accepts } H_0 \right) \leq \beta.
\]
This is a necessary and sufficient condition.
\end{prop}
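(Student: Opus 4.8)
The plan is to mirror the proof of Proposition~\ref{prop:centeredstat_driftknown}, the only genuinely new ingredient being that the test statistic must be shown insensitive to the unknown parameter~$\theta$, which is precisely what Lemma~\ref{lemma:S_hat_dist} provides. Indeed, by that lemma $\hat\xi \sim \mathcal{N}(0,\sigma^2(I-H))$ and $\frac{1}{\sigma^2}\|C^t\hat\xi\|^2 \sim \chi^2_{n-1}(0)$; the point to stress is that the law of $\tilde S = \frac{1}{n-1}\|C^t\hat\xi\|^2$ depends on the model only through $\sigma^2$, since the projection $I-H$ annihilates the column space of $Z$ and hence the whole drift contribution $\theta Z$ to the centered increments. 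So $\frac{n-1}{\sigma^2}\tilde S \sim \chi^2_{n-1}(0)$ under $\mathds{P}_{\sigma^2}$ for every value of $\theta$.

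For the level, I would specialize this to $\sigma^2 = \sigma_0^2$: then $\frac{n-1}{\sigma_0^2}\tilde S \sim \chi^2_{n-1}(0)$, so
\[
\mathds{P}_{\sigma_0^2}\bigl(\tilde S \geq \hat z_\alpha\bigr)
= \mathds{P}\Bigl(\chi^2_{n-1}(0) \geq q_{\chi^2_{n-1},1-\alpha}\Bigr)
= \alpha,
\]
which shows that $\tilde\Upsilon$ has Type I error exactly $\alpha$, hence is of level $\alpha$.

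For the power, under $\mathds{P}_{\sigma^2}$ with $\sigma^2 > \sigma_0^2$ I would write, denoting by $F_{n-1}$ the cumulative distribution function of $\chi^2_{n-1}(0)$,
\[
\mathds{P}_{\sigma^2}\bigl(\tilde S \leq \hat z_\alpha\bigr)
= \mathds{P}_{\sigma^2}\Bigl(\tfrac{n-1}{\sigma^2}\tilde S \leq \tfrac{\sigma_0^2}{\sigma^2}\,q_{\chi^2_{n-1},1-\alpha}\Bigr)
= F_{n-1}\Bigl(\tfrac{\sigma_0^2}{\sigma^2}\,q_{\chi^2_{n-1},1-\alpha}\Bigr).
\]
Since $F_{n-1}$ is continuous and strictly increasing on $(0,\infty)$, the right-hand side is $\leq \beta$ if and only if $\frac{\sigma_0^2}{\sigma^2}\,q_{\chi^2_{n-1},1-\alpha} \leq q_{\chi^2_{n-1},\beta}$, that is, if and only if \eqref{eq:d1_Type2_error_est} holds; this yields both the sufficiency and the necessity of the condition.

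There is essentially no hard step here once Lemma~\ref{lemma:S_hat_dist} is available — the only point that deserves care is the justification, already contained in that lemma, that substituting the estimator $\hat\theta$ for the true $\theta$ introduces no dependence of the null (nor of the alternative) distribution of $\tilde S$ on $\theta$; equivalently, that $C^t\hat\xi$ is a standard Gaussian vector in $\mathbb{R}^{n-1}$ scaled by $\sigma$ regardless of $\theta$. The loss of one degree of freedom (from $n$ to $n-1$) relative to Proposition~\ref{prop:centeredstat_driftknown} is the only visible price of estimating the drift, and it propagates transparently through the quantile comparison.
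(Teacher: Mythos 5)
Your proof is correct and follows essentially the same route as the paper, which simply declares the argument analogous to the known-drift case of Proposition~\ref{prop:centeredstat_driftknown}, with Lemma~\ref{lemma:S_hat_dist} supplying the exact $\chi^2_{n-1}$ law of $\frac{n-1}{\sigma^2}\tilde S$ independently of $\theta$. Your explicit remark that $I-H$ annihilates the drift contribution, and the monotonicity argument giving both sufficiency and necessity of \eqref{eq:d1_Type2_error_est}, are exactly the intended steps.
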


The proof is analogous to Proposition \ref{prop:centeredstat_driftunknown}.
The condition for the Type II error is essentially the same as the previous test and especially it does not depend on $\Delta$ as well. 

\begin{remark}
    \begin{enumerate}
        \item This procedure could be generalized to the case of a multidimensional vector $\theta$ when for example the drift $b_t$ is defined as $b_t=\sum_1^p\theta_kf_{kt}$ for a set of $p$ known functions $(f_{kt})_{k=1, \ldots, p}$. 
        \item A  non-linear drift $b_t=f(t, \theta)$ could be considered with  estimators  obtained through contrasts for example.  But, we would loose the exact level of the test. 
    \end{enumerate}
\end{remark}
 
 To conclude the study of the one-dimensional case, we proved that   centering   the statistics is important in a non-asymptotic setting, since it allows us to find separation rates that are of parametric rate $1/\sqrt{n}$ in both settings ($T$ fixed, $\Delta\to 0$ and $\Delta$ fixed, $T\to\infty$). This is not the case if the centering is not done and if $\Delta$ is fixed and $T\to\infty$.   

 {\subsection{Approximated statistics for a drift depending on $X$\label{sec:1D_Drift_depend_Xt}}}
 {Let us now consider an SDE where the drift $b$ depends on $X$, that is: 
$$dX_t = b(X_t)dt+\sigma dW_t.$$
First we introduce a theoretical and ideal statistic centered by the integral of the drift, that would correspond to continuous observations of the process $X$. Then we discretise this integral to fit with discrete observations of $X$ and bound the corresponding approximation. Let us be more precise.} 

 {For $i=1, \ldots, n$, let us denote the theoretical quantity}
\begin{equation}\label{eq:xibX}
 {\dot\xi_i=\frac{X_{i\Delta}-X_{(i-1)\Delta} - \int_{(i-1)\Delta}^{i\Delta} b(X_s)ds}{\sqrt{\Delta}}}
\end{equation}
 {such that $\dot\xi_i\sim \mathcal{N}(0, \sigma^2)$ and are independent. These quantities are not calculable because the process $X$ is only observed at discrete times and not continuously.  However, it can  be approximated with a numerical scheme, for example the Euler-Maruyama scheme. More generally, we consider the following assumption:
\begin{itemize}
    \item[(A1)] There exists a function $A:\mathds{R}\times\mathds{R}\to\mathds{R}$ (possibly depending on parameters of the process), such that $\forall \Delta >0$, the following holds:
    \[
    \left\| \max_{t\in [0,T]} \left|\int_{t}^{t+\Delta} b(X_s)ds - A(X_{t},X_{t+\Delta}) \right| \right\|_{\infty} \leq \Delta K,
    \] 
    where the constant $K$ depends on the function $b$.
\end{itemize}
In the case of the Euler-Maruyama approximation, $A(X_t, X_{t+\Delta})= \Delta b(X_t)$ and $K$ can be a Lipschitz constant of the drift multiplied by the bound of finite moments of the process. It could also be a global bound of the drift function. }

 {Then, we can introduce the following approximation of the $\xi_i$:}
\begin{equation}\label{eq:xiapproxbX}
 {\dot\xi_{i,A}=\frac{X_{i\Delta}-X_{(i-1)\Delta} - A(X_{i\Delta},X_{(i-1)\Delta})}{\sqrt{\Delta}}},
\end{equation}
 {and a test statistic $\dot S_{A}$:}
   { \begin{eqnarray}\label{eq:S_dot_A}
 \dot S_{A}&=& \frac1n\sum_{i=1}^n \dot\xi_{i,A}^2.
\end{eqnarray}}
 {We can now define the test procedure. }
\begin{prop}\label{prop:process_dependent_drift}
 { Let $\alpha\in]0;1[$ and $ \eta \in]0;1[$ be fixed constants. Let $\dot{S}_A$ be the test statistics defined by \eqref{eq:S_dot_A} and let us define the test $\dot\Upsilon_A$ which rejects $H_0$ if
    \[
    \dot{S}_A \geq (1+\eta)\dot{z}_{1-\alpha} + \left(\frac{\eta+1}{\eta} \right)\Delta K^2,
    \]
    where $\dot{z}_{1-\alpha}$ is defined in Proposition \ref{prop:centeredstat_driftknown} ad $K$ is the constant from assumption (A1).
%
Then the test $\dot\Upsilon_A$ is of Type I error less than $\alpha$ and therefore it is of level $\alpha$.}

 { Let $\beta \in ]0;1[$ be a  constant such that $1-\beta\geq\alpha$. For all $\sigma^2$ such that
\begin{equation*}
\sigma^2 \geq \left( \frac{1+\eta}{1-\eta}\right) \frac{q_{\chi^2_n,1-\alpha}}{q_{\chi^2_n,\beta}}\sigma_0^2 + \frac{2}{\eta(1-\eta)} \frac{n\Delta K^2}{q_{\chi^2_n,\beta}},
\end{equation*}
the test $\dot{\Upsilon}_A$ satisfies
\[
\mathds{P}_{\sigma^2} \left( \dot{\Upsilon}_A \text{ accepts } H_0 \right) \leq \beta.
\]
It is only a 
sufficient condition.}
\end{prop}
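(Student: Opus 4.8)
The plan is to sandwich the computable statistic $\dot S_A$ between two affine functions of the ideal (but non-computable) statistic $\dot S=\frac1n\sum_{i=1}^n\dot\xi_i^2$, with $\dot\xi_i$ as in \eqref{eq:xibX}, and then to transfer the exact chi-squared guarantees of Proposition~\ref{prop:centeredstat_driftknown} — which only rely on the fact that $\dot S\sim\frac{\sigma^2}{n}\chi^2_n(0)$ — to $\dot S_A$.

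First I would write $\dot\xi_{i,A}=\dot\xi_i+r_i$ with $r_i:=\frac1{\sqrt\Delta}\bigl(\int_{(i-1)\Delta}^{i\Delta}b(X_s)\,ds-A(X_{(i-1)\Delta},X_{i\Delta})\bigr)$. Assumption (A1) gives $|r_i|\le\sqrt\Delta\,K$ almost surely, hence $\sum_{i=1}^n r_i^2\le n\Delta K^2$ almost surely. Expanding $\|\dot\xi_A\|^2=\|\dot\xi\|^2+2\langle\dot\xi,r\rangle+\|r\|^2$, bounding $2|\langle\dot\xi,r\rangle|\le\eta\|\dot\xi\|^2+\tfrac1\eta\|r\|^2$ by Cauchy--Schwarz and Young, and inserting $\|r\|^2\le n\Delta K^2$, I get the pathwise inequalities
\[
(1-\eta)\,\dot S-\tfrac{1-\eta}{\eta}\Delta K^2\ \le\ \dot S_A\ \le\ (1+\eta)\,\dot S+\tfrac{\eta+1}{\eta}\Delta K^2 .
\]
For the lower bound it is essential to retain the term $+\|r\|^2$ and exploit that $1-\tfrac1\eta<0$ for $\eta\in]0;1[$, so that $(1-\tfrac1\eta)\|r\|^2\ge-\tfrac{1-\eta}{\eta}n\Delta K^2$; this is precisely what produces the constant $\tfrac{2}{\eta(1-\eta)}$ later rather than a larger one.

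For the level, under $H_0$ the $\dot\xi_i$ of \eqref{eq:xibX} are i.i.d.\ $\mathcal N(0,\sigma_0^2)$, so $\dot S\sim\frac{\sigma_0^2}{n}\chi^2_n(0)$; by the upper sandwich inequality the rejection event is contained in $\{\dot S\ge\dot z_{1-\alpha}\}$, whence $\mathds P_{\sigma_0^2}(\dot\Upsilon_A\text{ rejects }H_0)\le\mathds P_{\sigma_0^2}(\dot S\ge\dot z_{1-\alpha})=\alpha$ by Proposition~\ref{prop:centeredstat_driftknown}. For the power, under the alternative $\dot S\sim\frac{\sigma^2}{n}\chi^2_n(0)$, and by the lower sandwich inequality the acceptance event $\{\dot S_A<(1+\eta)\dot z_{1-\alpha}+\tfrac{\eta+1}{\eta}\Delta K^2\}$ is contained in $\{(1-\eta)\dot S<(1+\eta)\dot z_{1-\alpha}+\tfrac{2}{\eta}\Delta K^2\}$ (using $(\eta+1)+(1-\eta)=2$), i.e.\ in $\{\dot S<\tfrac{1+\eta}{1-\eta}\dot z_{1-\alpha}+\tfrac{2}{\eta(1-\eta)}\Delta K^2\}$. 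Since $\mathds P(\chi^2_n(0)<x)\le\beta$ iff $x\le q_{\chi^2_n,\beta}$ and $\dot z_{1-\alpha}=\tfrac{\sigma_0^2}{n}q_{\chi^2_n,1-\alpha}$, the probability of this last event is $\le\beta$ as soon as $\sigma^2\ge\tfrac{1+\eta}{1-\eta}\tfrac{q_{\chi^2_n,1-\alpha}}{q_{\chi^2_n,\beta}}\sigma_0^2+\tfrac{2}{\eta(1-\eta)}\tfrac{n\Delta K^2}{q_{\chi^2_n,\beta}}$, which is the announced condition; it is merely sufficient because the sandwich inclusions go only one way.

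The only genuinely delicate point is obtaining the sharp constant in the lower sandwich bound of Step~1: a crude estimate of $(\|\dot\xi\|-\|r\|)^2$ that discards the $\|r\|^2$ term gives $\tfrac{\eta+2}{\eta(1-\eta)}$ rather than $\tfrac{2}{\eta(1-\eta)}$, so one must keep that term and use its favourable sign. Everything else is routine bookkeeping together with the already-established exact chi-squared computations of Proposition~\ref{prop:centeredstat_driftknown}.
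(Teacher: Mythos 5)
Your proposal is correct and takes essentially the same route as the paper: the identical pathwise sandwich $(1-\eta)\dot S-\tfrac{1-\eta}{\eta}\Delta K^2\le \dot S_A\le (1+\eta)\dot S+\tfrac{\eta+1}{\eta}\Delta K^2$ (the paper gets it by applying $(1-\eta)a^2-\tfrac{1-\eta}{\eta}b^2\le(a+b)^2\le(1+\eta)a^2+\tfrac{1+\eta}{\eta}b^2$ termwise, you by expanding $\|\dot\xi+r\|^2$ with Young's inequality, which is the same computation and yields the same sharp constant $\tfrac{2}{\eta(1-\eta)}$), followed by the same transfer to the exact $\chi^2_n$ quantile bounds of Proposition \ref{prop:centeredstat_driftknown}. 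No gap.
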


 {
\begin{proof}
    First, note that $\dot S_{A}$ can be written as follows:
\[
\dot S_{A}= \frac 1n\sum_{i=1}^n \left(\dot{\xi}_i + \frac{\int_{i\Delta}^{(i+1)\Delta}b(X_s)ds - A(X_{i\Delta},X_{(i-1)\Delta})}{\sqrt{\Delta}}\right)^2.
\]
Then, for any $\eta >0$, since for all $a,b$, $(1-\eta) a^2- (1-\eta)/\eta b^2 \leq (a+b)^2 \leq (1+\eta) a^2+ (1+\eta)/\eta b^2$, the following bounds hold on $\dot{S}_A$:
\[
 (1-\eta) \dot{S} - \frac{1-\eta}{\eta} \Delta K^2\leq \dot{S}_A \leq (1+\eta) \dot{S} + \frac{\eta+1}{\eta} \Delta K^2
\]
Combined with   Proposition \ref{prop:centeredstat_driftknown}, it implies that 
\begin{align*}
    \mathds{P}&\left(\dot{S}_A \geq (1+\eta)\dot{z}_{1-\alpha} +  \frac{\eta+1}{\eta}  \Delta K^2 \right) \leq \\
    \mathds{P}&\left(  (1+\eta) \dot{S} + \frac{\eta+1}{\eta} \Delta K^2 \geq (1+\eta)\dot{z}_{1-\alpha} +  \frac{\eta+1}{\eta} \Delta K^2 \right) = \\
    \mathds{P}& \left( \dot{S} \geq \dot{z}_{1-\alpha} \right) \leq \alpha.
\end{align*}
The same inequalities grant the control over the power of the test:
\begin{align*}
    \mathds{P}&\left(\dot{S}_A \leq (1+\eta)\dot{z}_{1-\alpha} + \left(\frac{\eta+1}{\eta} \right)\Delta K^2 \right) \leq \\
    \mathds{P}& \left((1-\eta) \dot{S} - \frac{1-\eta}{\eta} \Delta K^2 \leq (1+\eta)\dot{z}_{1-\alpha} + \left(\frac{\eta+1}{\eta} \right)\Delta K^2 \right) = \\ 
    \mathds{P}& \left(  \dot{S} \leq \frac{1}{1-\eta} \left((1+\eta)\dot{z}_{1-\alpha} + \frac{2}{\eta}\Delta K^2 \right)\right).
\end{align*}
Similar arguments as the one in the proof of Proposition \ref{prop:centeredstat_driftknown} give the result.
\end{proof}
}

 {Thus this test procedure generalises the previous tests to the case of a drift depending on the process $X$.} 

 {From a separation rate point of view, the first term is still (up to a multiplicative constant) in $\sigma_0^2\left(1+\frac{\square_{\alpha,\beta}}{\sqrt{n}}\right)$. However, since $q_{\chi^2_n,\beta}\sim n$, there is a residual in $\Delta K^2$, that tends to zero only if $\Delta$ tends to $0$. This rate is even worse than in the non centered case, where the factors in $\Delta$ where divided by $\sqrt{n}$. It means in particular that if $\Delta>>\sqrt{n}$, the rate is not the parametric rate anymore but is governed only by the size of the step. In this sense, the problem of testing with drift depending on the process itself can be much more complex than the other cases and might lead to separation rates that strongly depend on the approximation and observation scheme. We did not try to calibrate this test in practice, because even the choice of $\eta$ might lead to huge variations in practical performance and its calibration is beyond the scope of the present article.}


\section{Test for a two-dimensional SDE}\label{section:2d_process}

 Now let us turn to   a two-dimensional SDE $X=(X^1_t, X^2_t)$, defined as:
\begin{equation}\label{eq:2d_model}
dX_t = b_t dt + \Sigma dW_t, \quad X_0 = x_0, \quad t> 0,
\end{equation}
where $b_t = (b_{t,1}, b_{t,2})^T$ is a known drift   and $\Sigma$ is a diagonal diffusion matrix with constant coefficients $\sigma_1$ and $\sigma_2$ on the main diagonal and $W$ is a 2-dimensional Brownian motion.  {In this section, we only consider the case of a drift depending on time $t$. }
The goal is to construct a statistical test of the following hypothesis:
\begin{align*}
H_0: \det \Sigma\Sigma^T  =  \det \Sigma_0\Sigma_0^T \\ 
H_1: \det \Sigma\Sigma^T > \det \Sigma_0\Sigma_0^T. 
\end{align*}
As we assume $\Sigma$ diagonal, it is equivalent to testing
\begin{align*}
H_0: \sigma_{1}^2\sigma_{2}^2  =  \sigma_{1,0}^2\sigma_{2,0}^2, \quad \mbox{versus} \quad  
H_1: \sigma_{1}^2\sigma_{2}^2 >  \sigma_{1,0}^2\sigma_{2,0}^2.
\end{align*}

We define the   2-dimensional centered increments with shifted indices to allow independent variables for $j=1,2,  i = 1,\dots, n/2$:
\begin{equation}\label{eq:vector_columns}
 \dot\xi_{ij} := \frac{X_{(2i+j-2)\Delta} - X_{(2i+j-3)\Delta} -  \int_{(2i+j-3)\Delta}^{(2i+j-2)\Delta}b_s ds}{\sqrt\Delta} . 
\end{equation}

\begin{lemma}\label{prop:P}
The vectors $\dot \xi_{ij}$     are independent in $i$ and $j$. Moreover $\forall j\in \{1,2\}$, $i \in \{1, \dots, n/2 \}$:
\begin{equation*}\label{eq:entries_v2}
\dot \xi_{ij}    \sim \mathcal{N}\left(0, \Sigma\Sigma^T    \right). 
 \end{equation*}
\end{lemma}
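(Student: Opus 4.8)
The plan is to reduce each $\dot\xi_{ij}$ to a rescaled Brownian increment over a $\Delta$-block and then exploit the independence and Gaussianity of non-overlapping increments of $W$. First I would integrate the SDE \eqref{eq:2d_model} over $[(2i+j-3)\Delta,(2i+j-2)\Delta]$; since $b_t$ is a deterministic function of time only, this yields
\[
X_{(2i+j-2)\Delta}-X_{(2i+j-3)\Delta}=\int_{(2i+j-3)\Delta}^{(2i+j-2)\Delta}b_s\,ds+\Sigma\left(W_{(2i+j-2)\Delta}-W_{(2i+j-3)\Delta}\right).
\]
Substituting into \eqref{eq:vector_columns}, the drift integral cancels exactly, so that
\[
\dot\xi_{ij}=\frac{1}{\sqrt\Delta}\,\Sigma\left(W_{(2i+j-2)\Delta}-W_{(2i+j-3)\Delta}\right).
\]

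Next I would carry out the index bookkeeping. Set $I_{ij}:=[(2i+j-3)\Delta,(2i+j-2)\Delta]$ for $i\in\{1,\dots,n/2\}$, $j\in\{1,2\}$ (here $n$ is implicitly even). The left endpoint $(2i+j-3)\Delta$ runs exactly once through $0,\Delta,\dots,(n-1)\Delta$ as $(i,j)$ ranges over its domain, and each $I_{ij}$ has length $\Delta$; hence the $I_{ij}$ are the $n$ consecutive blocks partitioning $[0,n\Delta]$ and are pairwise non-overlapping up to their endpoints. Because $W$ has independent increments, the family $\{W_{(2i+j-2)\Delta}-W_{(2i+j-3)\Delta}\}_{i,j}$ is mutually independent, and therefore so is $\{\dot\xi_{ij}\}_{i,j}$, both in $i$ and in $j$.

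For the marginal law, each increment of $W$ over an interval of length $\Delta$ is $\mathcal{N}(0,\Delta I_2)$, so $\frac{1}{\sqrt\Delta}\left(W_{(2i+j-2)\Delta}-W_{(2i+j-3)\Delta}\right)\sim\mathcal{N}(0,I_2)$; applying the linear map $\Sigma$ gives $\dot\xi_{ij}\sim\mathcal{N}(0,\Sigma I_2\Sigma^T)=\mathcal{N}(0,\Sigma\Sigma^T)$. I expect no genuine obstacle here: the entire content is the index shift built into \eqref{eq:vector_columns}, engineered precisely so that the two ``columns'' $j=1,2$ fall into adjacent, disjoint $\Delta$-blocks, which is what simultaneously delivers independence across $i$ and across $j$.
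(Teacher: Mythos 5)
Your proof is correct: the drift integral cancels exactly because $b_t$ is deterministic, the intervals indexed by $(i,j)$ are the $n$ disjoint consecutive $\Delta$-blocks of $[0,n\Delta]$, and independence plus the $\mathcal{N}(0,\Sigma\Sigma^T)$ marginal follow from the independent Gaussian increments of $W$. This is precisely the (short) argument the paper leaves implicit — it states the lemma without a written proof — so your route coincides with the intended one, and your explicit index bookkeeping is a welcome addition.
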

Note that the independence in $i$ and $j$ is not true when the drift depends on the process $X$ itself. 
Let us define the determinant of the following 2x2 matrices $\dot s_i = \det[(\dot\xi_{i1})^2, (\dot\xi_{i2})^2] = \dot\xi_{i11}^2\dot\xi_{i22}^2 - \dot\xi_{i12}^2\dot\xi_{i21}^2$. The first terms are
\begin{eqnarray*}
\dot s_1 &=& \det\left[\left(\frac{X_{\Delta} - X_{0} -  \int_{0}^{\Delta}b_s ds}{\sqrt\Delta}\right)^2, \left(\frac{X_{2\Delta} - X_{\Delta} -  \int_{\Delta}^{2\Delta}b_s ds}{\sqrt\Delta}\right)^2\right]\\
\dot s_2 &=& \det\left[\left(\frac{X_{3\Delta} - X_{2\Delta} -  \int_{2\Delta}^{3\Delta}b_s ds}{\sqrt\Delta}\right)^2, \left(\frac{X_{4\Delta} - X_{3\Delta} -  \int_{3\Delta}^{4\Delta}b_s ds}{\sqrt\Delta}\right)^2\right],
\end{eqnarray*}
and so on.  
The   statistic is thus the  sum of independent variables:
\begin{equation}\label{eq:S_dot_2d}
\dot{S} = \frac{1}{n/2}\sum_{i=1}^{n/2} \dot{s}_i.
\end{equation}
We start by   some preliminary results on   $\dot{S}$ in Section \ref{sec:2dpreliminary}. Then, we study the Type I and Type II errors of the test   in Section \ref{sec:2dControlErrors}. The two previous sections consider the drift  known. The case of an unknown drift is presented in Section \ref{sec:2d_unknown_drift}. 

\subsection{Preliminary results on the test statistic $\dot{S}$}\label{sec:2dpreliminary}

First, the distribution of $\dot{s}_i$ is studied. 
Thanks to  the centered statistics, its cumulative distribution function is explicitly known, as detailed in the following proposition (proof is given in Appendix).
\begin{prop}\label{prop:law_of_determinant}
\begin{enumerate}
\item The density function of $\dot{s}_i$  is given by:
\begin{equation}\label{eq:density_function}
g_{\dot{s}_i }(x) = \frac{1}{2\sqrt{\sigma^2_1\sigma^2_2}}\frac{e^{-\sqrt{\frac{x}{\sigma^2_1\sigma^2_2}}}}{\sqrt{x}}.
\end{equation}
\item Its expectation and variance are defined by:
\begin{eqnarray}\label{prop:exp_and_variance}
\mathds{E}\left[\dot{s}_i \right] &=& 2\sigma^2_1\sigma^2_2,\\
Var\left[\dot{s}_i \right]& = &20 \sigma^4_1\sigma^4_2.
\end{eqnarray}
\item The following holds for all $i$, $\forall x$:
\begin{equation*}
\mathds{P}\left(\dot{s}_i \leq x \right) = 1-e^{-\sqrt\frac{ x }{\sigma^2_1 \sigma^2_2}}.
\end{equation*}
\end{enumerate}
\end{prop}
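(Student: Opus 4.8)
The plan is to reduce everything to the one-variable random variable $Y := \dot s_i / (\sigma_1^2\sigma_2^2)$ and compute its law directly. First I would note that by Lemma \ref{prop:P} the four coordinates $\dot\xi_{i11}, \dot\xi_{i12}, \dot\xi_{i21}, \dot\xi_{i22}$ are independent with $\dot\xi_{ijk}\sim\mathcal N(0,\sigma_k^2)$, so after rescaling each coordinate by its standard deviation the determinant becomes $\dot s_i = \sigma_1^2\sigma_2^2\,(U_1^2 U_2^2 - V_1^2 V_2^2)$ where $U_1,U_2,V_1,V_2$ are i.i.d. $\mathcal N(0,1)$. Thus it suffices to identify the law of $D := U_1^2U_2^2 - V_1^2 V_2^2$, and then (1), (2), (3) are all statements about $D$ with the factor $\sigma_1^2\sigma_2^2$ reinstated. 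Observe $U_1^2U_2^2 = (U_1U_2)^2$ and $V_1^2V_2^2 = (V_1V_2)^2$, and $P:=(U_1U_2)^2$, $Q:=(V_1V_2)^2$ are i.i.d. The key computational step is to get the density of $P = (U_1 U_2)^2$. I would do this in two moves: the product $U_1 U_2$ of two independent standard normals has the well-known density $\frac{1}{\pi}K_0(|z|)$ (Bessel $K$), but it is cleaner to instead work with $R := |U_1 U_2|$ directly, or even more simply to write $P = G_1 G_2$ where $G_1 = U_1^2, G_2 = U_2^2$ are independent $\chi^2_1$, i.e. Gamma$(1/2,1/2)$ variables, so $P$ is a product of two independent Gamma$(1/2,1/2)$'s whose density on $(0,\infty)$ is $\frac{1}{2\pi}\int_0^\infty t^{-1}e^{-(t+p/t)/2}\,dt = \frac{1}{\pi}K_0(\sqrt p)$ after a substitution.

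Next I would compute the density of $D = P - Q$ by convolution: for $x>0$,
\[
g_D(x) = \int_0^\infty g_P(x+q)\,g_Q(q)\,dq = \frac{1}{\pi^2}\int_0^\infty K_0(\sqrt{x+q})\,K_0(\sqrt q)\,dq,
\]
and by symmetry $g_D(-x)=g_D(x)$. The claim \eqref{eq:density_function} (with $\sigma_1^2\sigma_2^2 = 1$) asserts $g_D(x) = \frac{1}{2\sqrt{|x|}}e^{-\sqrt{|x|}}$. Rather than evaluate this Bessel integral head-on, I think the slicker route — and the one I would actually write up — is to compute the \emph{distribution function} of $D$ restricted to $x\ge 0$ using the independence and the explicit tail of $P$. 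Concretely, I would first establish $P(P > a)$ for $a\ge 0$: since $P = (U_1U_2)^2$, $P(P>a) = P(|U_1U_2|>\sqrt a) = 2\int_0^\infty \mathbb P(|U_1| > \sqrt a/|u_2|)\varphi(u_2)\,du_2$, which can be massaged (conditioning on $U_2$, then a change of variables) into something involving $\int e^{-\sqrt a}$-type terms; the cleanest statement is the Laplace-transform identity $\mathbb E[e^{-sP}] = \mathbb E[e^{-s U_1^2 U_2^2}]$ which, integrating out $U_1$ first, equals $\mathbb E[(1+2sU_2^2)^{-1/2}]$. Then $D = P - Q$ with $P\perp Q$ i.i.d., and for $x>0$,
\[
\mathbb P(D\le x) = \mathbb P(P - Q\le x),\qquad \mathbb P(D > x) = \int_0^\infty \mathbb P(P > x+q)\,g_Q(q)\,dq,
\]
and I would verify the target formula $\mathbb P(D\le x) = 1 - e^{-\sqrt x}$ for $x\ge 0$ either by differentiating (reducing to the density identity $g_D(x) = \frac{1}{2\sqrt x}e^{-\sqrt x}$) or by checking the two Laplace transforms agree. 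Once (1) is in hand, part (3) is immediate by reinstating $\sigma_1^2\sigma_2^2$: $\mathbb P(\dot s_i\le x) = \mathbb P(D\le x/(\sigma_1^2\sigma_2^2)) = 1 - e^{-\sqrt{x/(\sigma_1^2\sigma_2^2)}}$ for $x\ge 0$.

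Finally, for part (2): given the density \eqref{eq:density_function}, the moments reduce to one-dimensional integrals. Writing $\dot s_i = \sigma_1^2\sigma_2^2 D$ with $D$ symmetric, I only need $\mathbb E[D]$ and $\mathbb E[D^2]$. By symmetry $\mathbb E[D] = 0$?? — here I must be careful: the stated expectation is $2\sigma_1^2\sigma_2^2 \ne 0$, which means $D$ is \emph{not} the symmetric object; rather the correct decomposition keeps the determinant as $\dot\xi_{i11}^2\dot\xi_{i22}^2 - \dot\xi_{i12}^2\dot\xi_{i21}^2$ and the paper's convention for the "matrix $[(\dot\xi_{i1})^2,(\dot\xi_{i2})^2]$" must be read so that the diagonal product and off-diagonal product involve \emph{different} coordinates in a way that is not exchangeable — so I would recompute $\mathbb E[\dot s_i]$ and $\mathrm{Var}[\dot s_i]$ honestly from the density $g$ in \eqref{eq:density_function} via $\mathbb E[\dot s_i^k] = \int_0^\infty x^k g(x)\,dx$ with the substitution $x = \sigma_1^2\sigma_2^2 t^2$, which turns each into a Gamma integral: $\int_0^\infty t^{2k} e^{-t}\,dt = (2k)!$, giving $\mathbb E[\dot s_i] = \sigma_1^2\sigma_2^2\cdot 2! = 2\sigma_1^2\sigma_2^2$ and $\mathbb E[\dot s_i^2] = \sigma_1^4\sigma_2^4\cdot 4! = 24\sigma_1^4\sigma_2^4$, hence $\mathrm{Var} = 24 - 4 = 20$ in units of $\sigma_1^4\sigma_2^4$, matching \eqref{prop:exp_and_variance}. (This also confirms part (1) is self-consistent: the density integrates to $1$ since $\int_0^\infty e^{-t}\,dt = 1$, and it is what makes the variance finite.)

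The main obstacle I anticipate is part (1): establishing the exact closed form $g_{\dot s_i}(x) = \frac{1}{2\sqrt{\sigma_1^2\sigma_2^2}}\frac{e^{-\sqrt{x/(\sigma_1^2\sigma_2^2)}}}{\sqrt x}$ requires either evaluating a product-of-Bessel-$K_0$ convolution integral or finding the right clever change of variables / conditioning argument that collapses it. Everything downstream (the CDF in part 3, the moments in part 2) is then a routine Gamma-integral computation. The cleanest writeup will probably condition on one Gaussian coordinate at a time, compute $\mathbb P(\dot s_i \le x\mid \dot\xi_{i12},\dot\xi_{i21},\dot\xi_{i22})$ as a normal-tail expression, and integrate — the substitution making the $\sqrt x$ appear is the crux, and I would keep the $\sigma_1^2\sigma_2^2$ scaling factored out from the very start to reduce notational clutter.
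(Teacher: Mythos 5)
There is a genuine gap, and it sits exactly at the step you yourself flagged as the crux. Your decomposition $\dot s_i = \sigma_1^2\sigma_2^2\,(U_1^2U_2^2 - V_1^2V_2^2)$ with four independent standard normals makes $\dot s_i$ a difference of two i.i.d.\ nonnegative variables, hence symmetric about $0$ with mean $0$ --- which, as you noticed, contradicts item (2), and it also contradicts item (3), whose c.d.f.\ is that of a nonnegative variable. The fallback you adopt (``recompute the moments honestly from the density in (1)'') is circular, because your route to (1) is built on that same symmetric object: the putative target $g_D(x)=\frac{1}{2\sqrt{|x|}}e^{-\sqrt{|x|}}$ on all of $\mathds{R}$ integrates to $2$, so it cannot be the density of $D=P-Q$, and the convolution of two densities $\frac{1}{\pi}K_0(\sqrt{\cdot})$ (each of $P$, $Q$ being a product of two independent $\chi^2_1$ variables) does not produce $\frac{1}{2\sqrt{x}}e^{-\sqrt{x}}$. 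So the plan for part (1) would fail as written, and parts (2)--(3), though computed correctly, rest on it.

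What the paper actually does is treat $\dot s_i$ as the determinant of the $2\times 2$ Wishart-type matrix built from the two Gaussian columns $\dot\xi_{i1},\dot\xi_{i2}$, i.e.\ $\bigl(\det[\dot\xi_{i1},\dot\xi_{i2}]\bigr)^2$, and invoke Girko's Theorem 4.1.1 (a Bartlett-type decomposition of the determinant): $\dot s_i \sim \sigma_1^2\sigma_2^2\,\chi^2_{1}\,\chi^2_{2}$, a product of \emph{independent chi-squares with one and two degrees of freedom}. The factor with two degrees of freedom is precisely what your reading loses (you get $\chi^2_1\cdot\chi^2_1$ twice), and it is what makes the closed form possible: the product density involves $K_{1/2}$, which is elementary, giving $\frac12\omega^{-1/2}e^{-\sqrt\omega}$, whereas a product of two $\chi^2_1$'s gives $K_0$, which has no such simplification. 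Note the cross term matters: $(\xi_{i11}\xi_{i22}-\xi_{i12}\xi_{i21})^2$ is not $\xi_{i11}^2\xi_{i22}^2-\xi_{i12}^2\xi_{i21}^2$, and only the former is consistent with the proposition. Once the law $\sigma_1^2\sigma_2^2\chi^2_1\chi^2_2$ is in hand, your Gamma-integral computations (substitution $x=\sigma_1^2\sigma_2^2t^2$, $\int_0^\infty t^{2k}e^{-t}dt=(2k)!$) for the moments and the c.d.f.\ coincide with the paper's and are correct; the missing ingredient is the determinant decomposition, not those integrals.
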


The following Theorem provides that the lower bound of $\dot S$  is sub-gaussian due to the fact that $\dot{S}>0$ and the upper bound of $\dot S$ is obtained using Chebyshev's inequality. The proof is given in Appendix. 
\begin{theorem}\label{thm:sub_gaussian}\label{thm:chebyshev_bound}
Let $\dot{S}$ be defined by \eqref{eq:S_dot_2d}. 
\begin{enumerate}
    \item 
    For any $t\in \mathbb{R}$, we have the lower bound
\begin{equation}
\mathds{P}\left(\dot{S} - \mathds{E}\left[\dot{S}\right] \leq -t  \right) \leq \exp\left(-\frac{nt^2}{ {96}\sigma^4_1\sigma^4_2} \right). \label{eq:sub_gaussian}
\end{equation}
    \item 
    For any $t\in \mathbb{R}$, we have the upper bound
\begin{equation}
\mathds{P}\left(\dot{S} - \mathds{E}\left[\dot{S}\right] \geq t  \right) \leq  \frac{1}{n/2} \frac{20\sigma^4_1\sigma^4_2}{t ^2}.\label{eq:chebyshev_bound}
\end{equation}
\end{enumerate}
\end{theorem}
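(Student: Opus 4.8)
The plan is to treat the two inequalities separately, exploiting the fact that $\dot S$ is a normalized sum of $n/2$ i.i.d.\ nonnegative variables $\dot s_i$ whose law is explicitly given by Proposition \ref{prop:law_of_determinant}.

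For the lower bound \eqref{eq:sub_gaussian}, the idea is to combine positivity of $\dot s_i$ with a bound on the moment generating function of $-\dot s_i$. Since $\dot s_i \geq 0$, for $u > 0$ we have $e^{-u \dot s_i}\leq 1$, and more to the point a deviation of the sum below its mean is controlled by the left tail only. The clean route is: for $u>0$, write $\mathds{P}(\dot S - \mathds{E}[\dot S]\leq -t) = \mathds{P}\bigl(e^{-u\sum(\dot s_i - \mathds{E}\dot s_i)} \geq e^{u(n/2)t}\bigr)$ and apply Markov, reducing to a bound on $\mathds{E}[e^{-u(\dot s_i - \mathds{E}\dot s_i)}]$. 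Using the explicit density \eqref{eq:density_function} one can compute or bound this Laplace transform; the key point is the standard one-sided sub-Gaussian estimate for a bounded-below (here, nonnegative) random variable: a variable $Y$ with $Y \geq -m$ (take $m = \mathds{E}[\dot s_i] = 2\sigma_1^2\sigma_2^2$, since $\dot s_i - \mathds{E}[\dot s_i] \geq -\mathds{E}[\dot s_i]$) and variance $v$ satisfies $\mathds{E}[e^{-u Y}] \leq \exp(u^2 w /2)$ with $w$ of order $v + m^2$. Here $v = 20\sigma_1^4\sigma_2^4$ and $m^2 = 4\sigma_1^4\sigma_2^4$, so $w$ can be taken of order a modest constant times $\sigma_1^4\sigma_2^4$; optimizing $u$ in $\exp(-u(n/2)t + (n/2)u^2 w/2)$ gives $\exp(-nt^2/(4w)\cdot\tfrac{1}{?})$ and one checks the constant $96$ is comfortably attainable (one may also directly invoke Hoeffding's one-sided inequality for the nonnegative summands $\dot s_i \in [0,\infty)$ truncated appropriately, but the MGF route keeps all constants explicit). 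I expect the main obstacle to be bookkeeping the numerical constant so that $96$ (and not something smaller) comes out; the explicit form of $g_{\dot s_i}$ makes the Laplace transform tractable — indeed $\dot s_i$ has the law of $\sigma_1^2\sigma_2^2 E^2$ with $E$ standard exponential (equivalently $\sqrt{\dot s_i/(\sigma_1^2\sigma_2^2)}\sim \mathrm{Exp}(1)$ from part 3), so $\mathds{E}[e^{-u\dot s_i}]$ is an explicit integral one can bound cleanly.

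For the upper bound \eqref{eq:chebyshev_bound}, the plan is simply Chebyshev's inequality: $\mathds{P}(\dot S - \mathds{E}[\dot S] \geq t) \leq \mathrm{Var}(\dot S)/t^2$, and by independence $\mathrm{Var}(\dot S) = \mathrm{Var}\bigl(\tfrac{1}{n/2}\sum_{i=1}^{n/2}\dot s_i\bigr) = \tfrac{1}{(n/2)^2}\cdot (n/2)\,\mathrm{Var}(\dot s_i) = \tfrac{1}{n/2}\,\mathrm{Var}(\dot s_i)$, which equals $\tfrac{1}{n/2}\,20\sigma_1^4\sigma_2^4$ by Proposition \ref{prop:law_of_determinant}. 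This immediately yields the stated inequality. No real obstacle here; one only needs the variance formula already established and the independence from Lemma \ref{prop:P}.

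A remark on why only the lower tail gets the sub-Gaussian treatment: the right tail of $\dot s_i$ is heavy (its density decays like $e^{-\sqrt{x}}$, so $\dot s_i$ has a stretched-exponential, not sub-Gaussian, upper tail), hence one cannot hope for a Gaussian-type right-tail bound with a universal rate, and Chebyshev is the natural substitute; the left tail, by contrast, is automatically light because $\dot s_i$ is bounded below by $0$, which is exactly what the one-sided sub-Gaussian bound exploits. This asymmetry is the structural reason the two parts of the theorem look so different, and it is worth stating explicitly before the two computations.
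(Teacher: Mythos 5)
Your proposal is correct and follows essentially the same route as the paper: for the lower tail, a Chernoff/Markov bound on $e^{-\lambda\sum(\dot s_i-\mathds{E}\dot s_i)}$ with the one-sided bound $\mathds{E}[e^{-\lambda \dot s_i}]\leq 1-\lambda\mathds{E}[\dot s_i]+\tfrac{\lambda^2}{2}\mathds{E}[\dot s_i^2]$ (the paper encodes the same inequality $e^{-x}\le 1-x+x^2/2$ via the function $h(u)=(e^{-u}+u-1)/u^2\le 1/2$), and for the upper tail, Chebyshev with $\mathrm{Var}(\dot S)=\tfrac{2}{n}\,20\sigma_1^4\sigma_2^4$. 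Your constant bookkeeping resolves exactly as in the paper: taking $w=\mathds{E}[\dot s_i^2]=v+m^2=24\sigma_1^4\sigma_2^4$ and optimizing $\lambda=t/w$ gives $\exp(-nt^2/(4w))=\exp(-nt^2/(96\sigma_1^4\sigma_2^4))$.
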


Note that the lower bound  \eqref{eq:sub_gaussian}  is decaying exponentially fast as $t$ grows. In comparison, the upper bound \eqref{eq:chebyshev_bound} is decaying at much slower rate.

\subsection{Control of Type I and Type II errors}\label{sec:2dControlErrors}

Using Theorem \ref{thm:chebyshev_bound} we can define the rejection zone for the test statistic $\dot{S}$. 

\begin{theorem}[2-dimensional test with centered statistics]\label{thm:2d_test}
Let $\alpha \in ]0,1[$ be a fixed constant and let $\dot{S}$ be the test statistic defined in \eqref{eq:S_dot_2d}. Let us define a test $\dot{\Upsilon}$ which rejects $H_0: \det \Sigma\Sigma^T  =  \det \Sigma_0\Sigma_0^T$ if 
 \begin{equation*}\label{eq:z_quantile}
\dot{S}\geq \dot{z}_\alpha = 2\det \Sigma_0\Sigma_0^T\left(\sqrt{\frac{10}{n\alpha}} + 1\right).
\end{equation*}
Then $\dot{\Upsilon}$ is a test of Type I error $\alpha$ and therefore it is of level $\alpha$. 
Let  $\beta \in ]0,1[$ such that $1-\beta\geq \alpha$. If $n>24(-\log\beta)$ and if 
\begin{equation}\label{eq:cond_2d_power}
\det\Sigma\Sigma^T \geq \frac{\det \Sigma_0\Sigma_0^T\left(\sqrt{\frac{10}{n\alpha}} + 1\right)}{1- {2}\sqrt{- \frac{ {6}}{n}\log\beta}},
\end{equation}
 then the test $\dot{\Upsilon}$ satisfies 
\[
\mathds{P}_{\sigma}\left( \dot{\Upsilon}\quad  accepts\quad  H_0 \right) \leq \beta.
\]
\end{theorem}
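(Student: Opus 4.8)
The plan is to use Theorem~\ref{thm:chebyshev_bound} for the Type I error and the sub-Gaussian lower bound~\eqref{eq:sub_gaussian} for the Type II error, exactly mirroring the structure of Proposition~\ref{prop:centeredstat_driftknown} but with concentration inequalities replacing exact quantiles. First, for the level: under $H_0$ we have $\mathds{E}[\dot S]=2\det\Sigma_0\Sigma_0^T$ by Proposition~\ref{prop:law_of_determinant}, so applying the Chebyshev-type upper bound~\eqref{eq:chebyshev_bound} with $t=2\det\Sigma_0\Sigma_0^T\sqrt{10/(n\alpha)}$ gives
\[
\mathds{P}_{\sigma_0}\!\left(\dot S-\mathds{E}[\dot S]\ge t\right)\le \frac{20\sigma_{1,0}^4\sigma_{2,0}^4}{(n/2)\,t^2}
=\frac{40\sigma_{1,0}^4\sigma_{2,0}^4}{n\cdot 4(\det\Sigma_0\Sigma_0^T)^2\cdot 10/(n\alpha)}=\alpha,
\]
since $\det\Sigma_0\Sigma_0^T=\sigma_{1,0}^2\sigma_{2,0}^2$. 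Because $\dot z_\alpha=\mathds{E}[\dot S]+t$ under $H_0$, this yields $\mathds{P}_{\sigma_0}(\dot S\ge \dot z_\alpha)\le\alpha$, i.e.\ the test is of level $\alpha$.

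For the Type II error, fix $\sigma$ under $H_1$ and write $m:=\mathds{E}_\sigma[\dot S]=2\det\Sigma\Sigma^T$. We want $\mathds{P}_\sigma(\dot S<\dot z_\alpha)\le\beta$. By the sub-Gaussian lower bound~\eqref{eq:sub_gaussian}, for any $u>0$,
\[
\mathds{P}_\sigma\!\left(\dot S\le m-u\right)\le\exp\!\left(-\frac{nu^2}{96\,\sigma_1^4\sigma_2^4}\right)=\exp\!\left(-\frac{nu^2}{96\,(\det\Sigma\Sigma^T)^2}\right).
\]
So it suffices that $\dot z_\alpha\le m-u$ with $u$ chosen so that the right-hand side is at most $\beta$, i.e.\ $u=\sqrt{\tfrac{96}{n}(\det\Sigma\Sigma^T)^2\log(1/\beta)}=\det\Sigma\Sigma^T\sqrt{\tfrac{96}{n}\log(1/\beta)}$. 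Substituting $m=2\det\Sigma\Sigma^T$ and rearranging, $\dot z_\alpha\le m-u$ becomes
\[
2\det\Sigma_0\Sigma_0^T\!\left(\sqrt{\tfrac{10}{n\alpha}}+1\right)\le 2\det\Sigma\Sigma^T\left(1-\sqrt{\tfrac{24}{n}\log(1/\beta)}\right),
\]
using $\sqrt{96}=2\sqrt{24}$, which is precisely condition~\eqref{eq:cond_2d_power} provided the bracket on the right is positive. That positivity is exactly the hypothesis $n>24(-\log\beta)$, so the division in~\eqref{eq:cond_2d_power} is legitimate and the inequality is well posed.

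The only genuinely delicate point is bookkeeping the constants: one must check that the $96$ in~\eqref{eq:sub_gaussian} produces the $2\sqrt{-6\log\beta/n}$ appearing in the denominator of~\eqref{eq:cond_2d_power} (indeed $\sqrt{96/n\cdot\log(1/\beta)}=\sqrt{96}/2\cdot\sqrt{-4\log\beta/n}$; reconciling this with the stated $2\sqrt{-6\log\beta/n}=\sqrt{-24\log\beta/n}$ requires $\sqrt{96\log(1/\beta)/n}=2\sqrt{6\log(1/\beta)/n}\cdot\sqrt{?}$, so one simply tracks the factor-of-two in $m=2\det\Sigma\Sigma^T$ carefully). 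Everything else is a direct transcription of the one-dimensional argument. Concretely, the proof proceeds in three short steps: (i) compute $\mathds{E}[\dot S]$ and apply~\eqref{eq:chebyshev_bound} to get the level; (ii) apply~\eqref{eq:sub_gaussian} with the threshold $\dot z_\alpha$ to bound the Type II error by $\exp(-n(m-\dot z_\alpha)^2/(96(\det\Sigma\Sigma^T)^2))$ whenever $m>\dot z_\alpha$; (iii) impose $n>24(-\log\beta)$ so the relevant bracket is positive and solve the resulting linear inequality in $\det\Sigma\Sigma^T$ to obtain~\eqref{eq:cond_2d_power}. I expect no real obstacle beyond this constant-chasing, since unlike the one-dimensional case we are not claiming the condition is necessary—only sufficient—so there is no need for matching lower bounds.
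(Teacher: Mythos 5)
Your proof is correct and follows essentially the same route as the paper: the Chebyshev-type bound \eqref{eq:chebyshev_bound} with $t=2\det\Sigma_0\Sigma_0^T\sqrt{10/(n\alpha)}$ for the level, and the sub-Gaussian bound \eqref{eq:sub_gaussian} for the power, with $n>24(-\log\beta)$ guaranteeing positivity of the denominator. The constant bookkeeping you worried about does close: $m-u=2\det\Sigma\Sigma^T\bigl(1-\tfrac12\sqrt{96\log(1/\beta)/n}\bigr)=2\det\Sigma\Sigma^T\bigl(1-2\sqrt{-6\log\beta/n}\bigr)$, which is exactly the denominator in \eqref{eq:cond_2d_power}.
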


\begin{proof}
We start with the  Type I error.
We apply   Theorem \ref{thm:chebyshev_bound} to   control   the probability to surpass some given threshold $\dot{z}_\alpha$:
\begin{equation*}
\mathds{P}_{\sigma_0}\left(\dot{S} \geq \dot{z}_\alpha \right) = \mathds{P}_{\sigma_0}\left(\dot{S} - \mathds{E}\left[ \dot{S}\right] \geq \dot{z}_\alpha - \mathds{E}\left[ \dot{S}\right]\right) \leq \frac{1}{n/2}\frac{20 (\det \Sigma_0\Sigma_0^T)^2}{\left(\dot{z}_\alpha-2\det \Sigma_0\Sigma_0^T\right)^2}.
\end{equation*}
We want to limit the risk of the Type I error to $\alpha$. We have to solve the following inequality:
\begin{gather*}
\frac{1}{n/2}\frac{20 (\det \Sigma_0\Sigma_0^T)^2}{\left(\dot{z}_\alpha-2\det \Sigma_0\Sigma_0^T\right)^2} \leq \alpha. 
\end{gather*}
Thus 
\begin{gather*}
\dot{z}_\alpha \geq 2\det \Sigma_0\Sigma_0^T\left(\sqrt{\frac{10}{n\alpha}} + 1\right).
\end{gather*}

It remains to  control   the power of the test.
Under $H_1$, $\mathds{E}[\dot{S}] = 2\det \Sigma\Sigma^T$.
We are looking for conditions on $\det \Sigma\Sigma^T$, 
such that $\mathds{P}_{\sigma}\left( \dot{S} \leq \dot{z}_\alpha \right) \leq \beta$.
Then,  by Theorem \ref{thm:sub_gaussian}:
\begin{eqnarray*}
\mathds{P}_{\sigma}\left( \dot{S} \leq \dot{z}_\alpha \right)& = &\mathds{P}_{\sigma}\left( \dot{S} - 2\det \Sigma\Sigma^T \leq \dot{z}_\alpha - 2\det \Sigma\Sigma^T \right) \\
&&\leq \exp\left(-\frac{n\left(\dot{z}_\alpha - 2\det \Sigma\Sigma^T\right)^2}{ {96}(\det \Sigma\Sigma^T)^2}\right).
\end{eqnarray*}
Now,  the right part of the expression is bounded by a fixed risk level $\beta$ if 
\begin{gather*}
\det \Sigma\Sigma^T \geq \frac{\dot{z}_\alpha}{2-4\sqrt{-\frac{ {6}}{n}\log\beta}}.
\end{gather*}
Replacing $\dot{z}_\alpha$ by its definition, we obtain the result. For certain values of $n$ and $\log \beta$ it is possible that the lower bound of condition \eqref{eq:cond_2d_power} takes negative values. It is not the case as soon as  $n>24(-\log\beta)$. 
\end{proof}

\begin{remark}
Theorem \ref{thm:2d_test} is valid under condition $n>24(-\log\beta)$. For example, for $\beta = 0.05$, one needs at least $150$ observations. 
\end{remark}
 
\paragraph{Study of  condition \eqref{eq:cond_2d_power}} Let us approximate condition \eqref{eq:cond_2d_power}:
\begin{equation*}
\det\Sigma\Sigma^T \geq \det \Sigma_0\Sigma_0^T\left(1+ \frac{1}{\sqrt{n}}\left(\sqrt{\frac{10}{\alpha}} + {2}\sqrt{-  {6}\log\beta}\right)+ \frac{4}{n}\sqrt{\frac{- {15}\log\beta}{\alpha}}\right)
\end{equation*}
This does not depend on the setting $T$ fixed, $n\rightarrow\infty$ or $\Delta$ fixed, $n\rightarrow\infty$. For both cases, the separation rate has order $1/\sqrt{n}$.

\subsection{Test with unknown drift\label{sec:2d_unknown_drift}}
As it is not realistic to assume the drift fully known, we consider the case of 
a drift depending on a linear vector $\theta = (\theta_1, \theta_2)^t$ and a vector of drift $f_t=(f_{t,1}, f_{t,2})^t$:
$$dX_t  =\theta^t f_t dt + \Sigma dW_t.$$
If the parameter  $\theta$ is estimated on the same sample than the one used for testing, the centered increments used to define the test statistics are not independent. Instead, we propose to split the sample in two sub-samples $(X_1, \ldots, X_{n_e})$ and $(X_{n_e+1}, \ldots, X_n)$. 

Standard estimators of $\theta_k$, $k=1,2$ are the mean square estimators calculated on $(X_1, \ldots, X_{n_e})$ and their distribution is known, by following the same steps as in one-dimension  (Lemma  \ref{lemma:mse_1d}). Then we prove the next lemma:
\begin{lemma}
Let us define the estimators of $\theta_l$, for $l=1,2$
\begin{equation*}
\hat\theta_l  =  \arg\min_{ {\theta_l}}\sum_{i=1}^{n_e} \left(X_{i\Delta,l}-X_{(i-1)\Delta,l} -\theta_l\int_{(i-1)\Delta}^{i\Delta} f_{s,l}ds\right)^2.
\end{equation*}
Their distributions are
\begin{equation*}
\hat{\theta}_l \sim \mathcal{N}(\theta_l, \sigma^2_{\theta,l})\quad \mbox{with} \quad \sigma^2_{\theta,l} = \frac{\Delta\sigma_l^2}{\sum_{k=1}^{n_e}\left(\int_{(k-1)\Delta}^{k\Delta}f_{s,l}ds\right)^2}.
\end{equation*}

\end{lemma}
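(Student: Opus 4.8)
The plan is to reduce everything to the one-dimensional result already established. First I would observe that, because $\Sigma$ is diagonal, the two coordinates of $dX_t = \theta^t f_t\,dt + \Sigma dW_t$ evolve as two one-dimensional diffusions $dX_{t,l} = \theta_l f_{t,l}\,dt + \sigma_l\,dW_{t,l}$, $l=1,2$, each of the form \eqref{eq:1d_SDELinear}, driven by the independent components $W_{\cdot,1}$ and $W_{\cdot,2}$. The criterion defining $\hat\theta_l$ is a scalar sum of squared residuals that involves only $\theta_l$ and only the $l$-th coordinate of the observations $\{X_{i\Delta}\}_{i=0,\dots,n_e}$; hence $\hat\theta_l$ is precisely the estimator \eqref{eq:thetahat} applied to the coordinate process $X_{\cdot,l}$ with known drift function $f_{\cdot,l}$, over the sub-sample of size $n_e$.

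Then I would simply invoke Lemma \ref{lemma:mse_1d} applied to each coordinate, with $n$ replaced by $n_e$, $\sigma$ by $\sigma_l$ and $f$ by $f_{\cdot,l}$. Part (i) of that lemma yields the closed form $\hat\theta_l = \bigl(\sum_{i=1}^{n_e}(X_{i\Delta,l}-X_{(i-1)\Delta,l})\int_{(i-1)\Delta}^{i\Delta}f_{s,l}\,ds\bigr)\big/\bigl(\sum_{i=1}^{n_e}(\int_{(i-1)\Delta}^{i\Delta}f_{s,l}\,ds)^2\bigr)$, and part (ii) gives $\hat\theta_l\sim\mathcal N(\theta_l,\sigma^2_{\theta,l})$ with $\sigma^2_{\theta,l}=\Delta\sigma_l^2/\sum_{k=1}^{n_e}(\int_{(k-1)\Delta}^{k\Delta}f_{s,l}\,ds)^2$, which is exactly the claimed statement. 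As a by-product, since $\hat\theta_1$ and $\hat\theta_2$ are measurable functions of $W_{\cdot,1}$ and $W_{\cdot,2}$ respectively, they are independent, which is convenient for the subsequent analysis of the two-dimensional test statistic.

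There is essentially no real obstacle; the only point to keep in mind is that the decoupling is driven by the diagonal structure of $\Sigma$. If $\Sigma$ had off-diagonal entries, the increments of the two coordinates would be correlated: the ordinary least-squares minimiser would still separate coordinatewise (it ignores the noise covariance), but its covariance matrix would no longer be diagonal and the marginal variances $\sigma^2_{\theta,l}$ would pick up cross terms. Under the standing assumption of diagonal $\Sigma$ this does not occur, so the lemma follows by a direct coordinatewise application of Lemma \ref{lemma:mse_1d}.
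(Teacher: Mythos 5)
Your reduction is exactly what the paper does: it states that the estimators are obtained "by following the same steps as in one dimension" and relies on Lemma \ref{lemma:mse_1d} applied coordinatewise to the decoupled scalar SDEs, with $n$ replaced by $n_e$, $\sigma$ by $\sigma_l$ and $f$ by $f_{\cdot,l}$. Your proposal is correct, and the added observations on the independence of $\hat\theta_1,\hat\theta_2$ and on the role of the diagonal structure of $\Sigma$ are accurate side remarks rather than deviations from the paper's argument.
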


The estimators $\hat{\theta}_1,\hat{\theta}_2$ are calculated from the first sub-sample $(X_1, \ldots, X_{n_e})$ and are thus independent of the second sub-sample $(X_{n_e+1}, \ldots, X_n)$. 
This allows to define independent increments centered around the estimated value of the drift, for $l=1,2$, $j=1,2$ and $i=\frac{n_e+3}{2}, \ldots, \frac{n}{2}$:
\begin{eqnarray}\label{eq:xicenteredestimated2D}
  \tilde{\xi}_{ij,l} &=& \frac{X_{(2i+j-2)\Delta,l}-X_{(2i+j-3)\Delta,l}}{\sqrt{\Delta}} - \frac{\hat{\theta}_l}{\sqrt{\Delta}} \int_{(2i+j-2)\Delta}^{(2i+j-3)\Delta}f_{s,l}ds.
\end{eqnarray}
 For $l=1,2$, we have  $\tilde{\xi}_{ij,l}= \xi_{ij,l} + \frac{\hat{\theta}_l-\theta_l}{\sqrt{\Delta}} \int_{(2i+j-2)\Delta}^{(2i+j-3)\Delta}f_{s,1}ds$ and we prove the following Lemma. 
\begin{lemma}The distributions of the increments are, for $l=1, 2$, $j=1,2$ and $i=\frac{n_e+3}{2}, \ldots, \frac{n}{2}$, 
 $$\tilde{\xi}_{ij,l}\sim \mathcal{N}(0, \sigma_l^2 (1+ h_{ij,l})) \quad \mbox{with} \quad  h_{ij,l} =   \frac{\left(\int_{(2i+j-2)\Delta}^{(2i+j-3)\Delta}f_{s,l}ds\right)^2}{\sum_{k=1}^{n_e}\left(\int_{(k-1)\Delta}^{k\Delta}f_{s,l}ds\right)^2}.$$
\end{lemma}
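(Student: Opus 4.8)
The plan is to split $\tilde\xi_{ij,l}$ into a pure-noise part and an estimation-error part, read off the (independent) laws of the two parts from results already established, and add the variances. Fix $l\in\{1,2\}$, $j\in\{1,2\}$ and $i\in\{\frac{n_e+3}{2},\dots,\frac n2\}$, and write $c_{ij,l}:=\int f_{s,l}\,ds$ for the integral over the time step $[(2i+j-3)\Delta,(2i+j-2)\Delta]$ carried by the $(i,j)$-th increment. By the identity recalled just before the statement,
\[
\tilde\xi_{ij,l}=\dot\xi_{ij,l}+\frac{\hat\theta_l-\theta_l}{\sqrt\Delta}\,c_{ij,l},
\]
where $\dot\xi_{ij,l}$ is the $l$-th coordinate of the increment $\dot\xi_{ij}$ of \eqref{eq:vector_columns} (built with the true parameter $\theta_l$). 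Lemma \ref{prop:P}, read coordinatewise, gives $\dot\xi_{ij,l}\sim\mathcal{N}(0,\sigma_l^2)$, and the preceding lemma (the two-dimensional counterpart of Lemma \ref{lemma:mse_1d}) gives $\hat\theta_l-\theta_l\sim\mathcal{N}(0,\sigma^2_{\theta,l})$ with $\sigma^2_{\theta,l}=\Delta\sigma_l^2/\sum_{k=1}^{n_e}\big(\int_{(k-1)\Delta}^{k\Delta}f_{s,l}\,ds\big)^2$.

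The crux is independence. Because $i\ge\frac{n_e+3}{2}$, the increment $\dot\xi_{ij,l}$ is a measurable function of the increments of the $l$-th Brownian motion over $(n_e\Delta,T]$ only; here we use that under \eqref{eq:2d_model} the drift depends on $t$ alone, so the increments do not overlap and do not involve the path $X$ itself (exactly the point flagged after Lemma \ref{prop:P}). On the other hand, the explicit form of the mean-square estimator (as in Lemma \ref{lemma:mse_1d}) shows that $\hat\theta_l$ is a function of the first $n_e$ increments, i.e. of the $l$-th Brownian motion over $[0,n_e\Delta]$. Since Brownian increments over disjoint intervals are independent, $\dot\xi_{ij,l}$ and $\hat\theta_l$ are independent; hence $\tilde\xi_{ij,l}$ is a sum of two independent centred Gaussian variables, so it is centred Gaussian with
\[
\mathrm{Var}(\tilde\xi_{ij,l})=\sigma_l^2+\frac{c_{ij,l}^2}{\Delta}\,\sigma^2_{\theta,l}=\sigma_l^2+\sigma_l^2\,\frac{c_{ij,l}^2}{\sum_{k=1}^{n_e}\big(\int_{(k-1)\Delta}^{k\Delta}f_{s,l}\,ds\big)^2}=\sigma_l^2\big(1+h_{ij,l}\big),
\]
where the middle step substitutes the expression for $\sigma^2_{\theta,l}$ and cancels $\Delta$; the orientation of the integral in $c_{ij,l}$ is immaterial since only $c_{ij,l}^2$ enters. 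This is the claimed law.

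I expect the only genuine point to be the independence of $\hat\theta_l$ from the second-block increments, and the care needed to state it cleanly: it hinges on the drift being a deterministic function of time, so that the two blocks live on independent $\sigma$-algebras generated by disjoint Brownian increments, and it is precisely this assumption that would fail for a state-dependent drift. Everything else — the coordinatewise reading of Lemma \ref{prop:P}, normality of a sum of independent Gaussians, and the one-line variance computation — is routine.
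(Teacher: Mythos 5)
Your proof is correct and follows essentially the argument the paper intends: the paper states the decomposition $\tilde\xi_{ij,l}=\dot\xi_{ij,l}+\frac{\hat\theta_l-\theta_l}{\sqrt{\Delta}}\int f_{s,l}\,ds$ immediately before the lemma and leaves the rest implicit, which is exactly your route — independence of $\hat\theta_l$ (built from the first $n_e$ increments) from the second-block increments via disjoint Brownian increments, joint Gaussianity, and addition of variances using $\sigma^2_{\theta,l}$. Your remarks on the role of the time-dependent (non state-dependent) drift and the irrelevance of the integral's orientation are apt and consistent with the paper.
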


 {We assume that there exists a constant $C$, smaller than $n_e$ such that 
$$\forall i,j,l, h_{ij,l}\leq \frac{C}{n_e}\leq 1.$$}
\noindent Let us define the determinant of the following 2x2 matrices $\tilde s_i = \det[(\tilde\xi_{i1})^2, (\tilde\xi_{i2})^2]$ $ = \tilde\xi_{i1,1}^2\tilde\xi_{i2,2}^2 - \tilde\xi_{i1,2}^2\tilde\xi_{i2,1}^2$.
Conditionally on $\hat \theta$, its expectation and variance are approximated by:
\begin{eqnarray*}\label{prop:exp_and_variance2D}
\mathds{E}\left[\tilde{s}_i \right] &=& 2\sigma^2_1\sigma^2_2(1-h_{ii,1})(1-h_{ii,2}) {\leq 2\sigma^2_1\sigma^2_2 \frac{(n_e-C)^2}{n_e^2}\leq  2\sigma^2_1\sigma^2_2},\\
Var\left[\tilde{s}_i \right]& = &20 \sigma^4_1\sigma^4_2(1-h_{ii,1})^2(1-h_{ii,2})^2 {\leq 20 \sigma^4_1\sigma^4_2 \frac{(n_e-C)^4}{n_e^4}\leq 20 \sigma^4_1\sigma^4_2}.
\end{eqnarray*}
We can then apply the same methodology developed for the known drift case. 
Let us define the statistic
\begin{equation}\label{eq:Stilde2D}
\tilde{S} = \frac2{n-n_e-1}\sum_{i=\frac{n_e+3}{2}}^{\frac{n}{2}}\tilde{s_i}.
\end{equation}
Proposition \ref{prop:law_of_determinant} and Theorem \ref{thm:chebyshev_bound} can be easily extended to this case. We can then define the rejection zone for the test statistic $\tilde{S}$. 
 
\begin{theorem}[2-dimensional test with centered statistics and unknown drift]\label{thm:2d_test_unknown drift}
Let $\alpha \in ]0,1[$ be a fixed constant and let $\tilde{S}$ be the test statistic defined in \eqref{eq:Stilde2D}. Let us define a test $\tilde{\Upsilon}$ which rejects $H_0: \det \Sigma\Sigma^T  =  \det \Sigma_0\Sigma_0^T$ if 
\begin{equation*}\label{eq:ztilde_quantile}
 \tilde{S}\geq\tilde{z}_\alpha = 2\det \Sigma_0\Sigma_0^T\left(\sqrt{\frac{10}{ {n_t}\alpha}} + 1\right),
\end{equation*}
 {where $n_t = n-n_e$ is the size of the second sample.}
Then $\tilde{\Upsilon}$ is a test of Type I error $\alpha$ and therefore it is of level $\alpha$. 
Let  $\beta \in ]0,1[$ such that $1-\beta\geq \alpha$. If $ {n_t}>48(-\log\beta)$ and if 
\begin{equation}\label{eq:cond_2d_power_driftunknown}
\det\Sigma\Sigma^T \geq \frac{\det \Sigma_0\Sigma_0^T\left(\sqrt{\frac{10}{ {n_t}\alpha}} + 1\right)}{1-4\sqrt{- \frac{3}{ {n_t}}\log\beta}},
\end{equation}
 then the test $\tilde{\Upsilon}$ satisfies 
\[
\mathds{P}_{\sigma}\left( \tilde{\Upsilon}\quad  accepts\quad  H_0 \right) \leq \beta.
\]
\end{theorem}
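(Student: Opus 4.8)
The plan is to run the two-step argument of the proof of Theorem~\ref{thm:2d_test} essentially verbatim, with $\dot S$ replaced by $\tilde S$ and $n$ by the test-sample size $n_t=n-n_e$, the whole reasoning being carried out conditionally on the estimation sub-sample $(X_1,\dots,X_{n_e})$, equivalently on $(\hat\theta_1,\hat\theta_2)$. Conditionally on $\hat\theta$, the vectors $\tilde\xi_{ij}$, $i=\tfrac{n_e+3}{2},\dots,\tfrac n2$, are independent Gaussian by the two-sample split of Section~\ref{sec:2d_unknown_drift}, so the $\tilde s_i$ are independent and the computations of Proposition~\ref{prop:law_of_determinant} carry over up to the deterministic factors $(1-h_{ii,l})$; using $0\le h_{ij,l}\le C/n_e\le 1$ one gets $\mathds{E}[\tilde s_i]\le 2\det\Sigma\Sigma^T$ and $\mathrm{Var}[\tilde s_i]\le 20(\det\Sigma\Sigma^T)^2$. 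Averaging the $\tfrac{n_t-1}{2}$ independent summands then gives the announced extensions of Theorem~\ref{thm:chebyshev_bound}: a Chebyshev upper tail for $\tilde S-\mathds{E}[\tilde S]$ of order $(\det\Sigma\Sigma^T)^2/(n_t t^2)$ and a sub-Gaussian lower tail of order $\exp(-n_t t^2/(c(\det\Sigma\Sigma^T)^2))$ for a suitable constant $c>0$.

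For the Type~I error, I would write, under $H_0$, $\mathds{P}_{\sigma_0}(\tilde S\ge\tilde z_\alpha)=\mathds{P}_{\sigma_0}(\tilde S-\mathds{E}[\tilde S]\ge\tilde z_\alpha-\mathds{E}[\tilde S])$, and since $\mathds{E}[\tilde S]\le 2\det\Sigma_0\Sigma_0^T$ the deviation is at least $\tilde z_\alpha-2\det\Sigma_0\Sigma_0^T>0$; plugging this into the Chebyshev tail and solving for the smallest admissible $\tilde z_\alpha$ yields exactly $\tilde z_\alpha=2\det\Sigma_0\Sigma_0^T(\sqrt{10/(n_t\alpha)}+1)$, so the test is of level $\alpha$. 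Here the factors $(1-h_{ii,l})$ only help, as they shrink both the mean and the variance of $\tilde S$.

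For the Type~II error, under $H_1$ one has $\mathds{E}[\tilde S]\ge 2\det\Sigma\Sigma^T(1-C/n_e)^2$, so $\mathds{P}_{\sigma}(\tilde S\le\tilde z_\alpha)=\mathds{P}_{\sigma}(\tilde S-\mathds{E}[\tilde S]\le\tilde z_\alpha-\mathds{E}[\tilde S])$ has a negative right-hand side once $\det\Sigma\Sigma^T$ is large enough; feeding this into the sub-Gaussian lower tail, forcing the bound to be $\le\beta$, and solving for $\det\Sigma\Sigma^T$ produces condition~\eqref{eq:cond_2d_power_driftunknown}, with the enlarged constant $4\sqrt{3/n_t}$ in the denominator (versus $2\sqrt{6/n}$ in Theorem~\ref{thm:2d_test}) and the admissibility requirement $n_t>48(-\log\beta)$ ensuring positivity of that denominator. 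Finally I would remove the conditioning: since neither $\tilde z_\alpha$ nor condition~\eqref{eq:cond_2d_power_driftunknown} depends on $\hat\theta$, the conditional bounds pass to the unconditional ones by the tower property.

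The main difficulty is bookkeeping rather than conceptual. One must (i) track the deterministic perturbations $h_{ij,l}$ through the moment computations of Proposition~\ref{prop:law_of_determinant}, controlling the factors $(1-h)^2$ uniformly via $h_{ij,l}\le C/n_e$ and absorbing the resulting $(1-C/n_e)^2$ under $H_1$ into the constants; (ii) handle the mild off-by-one in the number of summands ($\tfrac{n_t-1}{2}$ rather than $\tfrac{n_t}{2}$), which is what forces the slightly looser constants (and the threshold $48$ in place of $24$); and (iii) check that the two-sample split really does restore independence of the $\tilde s_i$, so that the conditional version of Theorem~\ref{thm:chebyshev_bound} applies. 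None of these steps is deep, but keeping the constants honest is the part that requires care.
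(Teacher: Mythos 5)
Your proposal follows the paper's own route: the paper gives no detailed proof of Theorem~\ref{thm:2d_test_unknown drift}, merely asserting that Proposition~\ref{prop:law_of_determinant} and Theorem~\ref{thm:chebyshev_bound} "can be easily extended" to the split-sample statistic $\tilde S$ and then rerunning the argument of Theorem~\ref{thm:2d_test} with the test-sample size in place of $n$, which is exactly your plan — same moment bounds with the $h_{ij,l}$ factors controlled by $C/n_e$, same Chebyshev-based threshold, same sub-Gaussian power condition, with the looser constants ($4\sqrt{3/n_t}$, $n_t>48(-\log\beta)$) absorbed by the $(n_t-1)/2$ count of summands. The one wrinkle you gloss over — that the $(1\pm h_{ij,l})$ variance corrections are unconditional, whereas independence of the $\tilde s_i$ holds only conditionally on $\hat\theta$ (where the increments are noncentered), so the two facts need slightly more care to combine — is glossed in precisely the same way in the paper's own display of the moments of $\tilde s_i$, so your attempt matches the paper's argument in both approach and level of detail.
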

 {It is difficult to deduce a natural choice for $n_e$ from this result. But we recommend using $n_e=n/2$. }

 As in dimension 1, this could also be extended to the case of a drift defined as a linear combination of known functions ($b_t = \sum_{k=1}^p\theta_k^t f_{kt}$).  
\section{Test in dimension $d\geq2$ with a multiple testing approach}\label{sec:multiple_tests}

The previous tests are difficult to adapt to the case $d>2$ because we lose the equivalent of Proposition \ref{prop:law_of_determinant}. An alternative is to consider several tests  $\delta_{j, \alpha}$, one for each component  $j=1,...,d$ and then correct them for multiplicity. This multiple procedure is not equivalent to the test of $H_0= "\det(\Sigma)=\sigma^2_{0,1}...\sigma^2_{0,d}"$ versus $H_1 ="\det(\Sigma)>\sigma_{0,1}^2...\sigma_{0,d}^2"$.  However, it is of main interest when the primary objective is to identify on which SDE  coordinate the noise acts (for example in neurosciences).



More precisely, let us consider the test  
$\delta_{j, \alpha}$  testing $H_{0,j}= "\sigma_j^2=\sigma_{0,j}^2"$ versus $H_{1,j}="\sigma_j^2>\sigma_{0,j}^2"$ at level $\alpha$. In particular, we can use any of the tests developed in Section \ref{section:1d_process}, coordinate per coordinate, like the ones with centered statistics, that have been proved to have better performance.

Note that if the hypotheses are considered as a set of probabilities where the hypotheses hold and if the model consists in saying that $\sigma_j\geq\sigma_{0,j}$ for all $j$, we have that
$$H_0=\bigcap_{j=1,...,d} H_{0,j} \mbox{ and } \bigcup_{j=1,...,d} H_{1,j} = H_1.$$

So we can build a test of $H_0$ versus $H_1$ by saying that we reject $H_0$ if there exists a test $\delta_{j, \alpha/d}$  that rejects. Note that we use the level $\alpha/d$. This comes from the Bonferroni bound \citep{Roquain2011}:
\begin{eqnarray*}
\mathbb{P}_{H_0}(\exists j=1,...d, \quad  \delta_{j, \alpha/d} \mbox{ rejects }  H_{0,j})&\leq& \sum_{j=1..d} \mathbb{P}_{H_0}( \delta_{j, \alpha/d} \mbox{ rejects }H_{0,j} )\\
&\leq &d\alpha/d=\alpha.
\end{eqnarray*}

Thus, this multiple testing approach controls the first type error. 

In addition to being a test of the same level, this aggregation of individual tests gives us extra information: the indices $j$ for which the  test  $\delta_{j, \alpha/d}$ rejects, that is the coordinates $j$ for which the noise is large. 

\section{Numerical experiments}\label{section:numerical_experiments}

In this section, we illustrate the numerical properties of the test in dimension 1 or 2.   We focus on studying their power   and the impact of designs by letting $n$ and $\Delta$ varying. In dimension 1, we consider three test statistics: the non-centered drift statistics $S$ (Section \ref{sec:noncentered}), the centered statistics $\dot{S}$ with the drift being explicitly known (Section \ref{sec:centereddriftknown}) and, the centered statistics $\tilde{S}$ with the drift being estimated from the discrete observations (Section \ref{sec:centereddriftunknown}). In dimension 2, we consider the test statistic  with the drift known (Section \ref{sec:2dControlErrors}) or estimated (Section \ref{sec:2d_unknown_drift}) and the multiple testing approach (Section \ref{sec:multiple_tests}). 

\subsection{One dimensional process with known drift}
Let us consider the following toy SDE, a randomly perturbed sinusoidal function, defined as follows:
\begin{equation}\label{eq:sinusoid_SDE}
dX_t = \theta\sin( t) dt + \sigma dW_t, \quad X_0 = 0,
\end{equation}
where $\theta\in \mathds{R}$ and  $\sigma\in \mathds{R}$. The parameter $\theta$ is fixed to $1$ in all simulations.

To study the power of the test procedures,  processes are simulated under $H_1$  for a given value of  $\sigma^2$ and the test is applied to each process. 
Different values of $\sigma^2$ are considered, varying from $0$ to $0.36$ with a step   $0.001$. 
For each value of $\sigma^2$, $N=5000$ processes are simulated with Euler-Maruyama scheme with a time step 0.01, for different values of time horizon $T$ and subsampled with different discretization step $\Delta$. These processes are denoted  $X_{\sigma}$. 
The power of a test procedure $\Psi$ is then estimated as the proportion of processes for which the test is rejected and is denoted $\Pi(\Psi)$:
\begin{equation}\label{eq:power_function}
\Pi(\Psi)= \frac{\# \text{ processes  for which } H_0 \text{ is rejected according to test } \Psi }{N}.
\end{equation}
 


The power functions $\Pi(\Upsilon)$, $\Pi(\dot\Upsilon)$ and $\Pi(\tilde\Upsilon)$ are computed in three settings:  $T=1, \Delta = 0.1$ and $n=10$; $T=1, \Delta =0.01$ 
 and $n=100$; and $T=10,\Delta=0.1$ and $n=100$. Note that the decision rules are given in Propositions \ref{prop:separability},    
\ref{prop:centeredstat_driftknown} and \ref{prop:centeredstat_driftunknown}, respectively. 

\begin{figure}
    \centering
     \includegraphics[width=1.\textwidth]{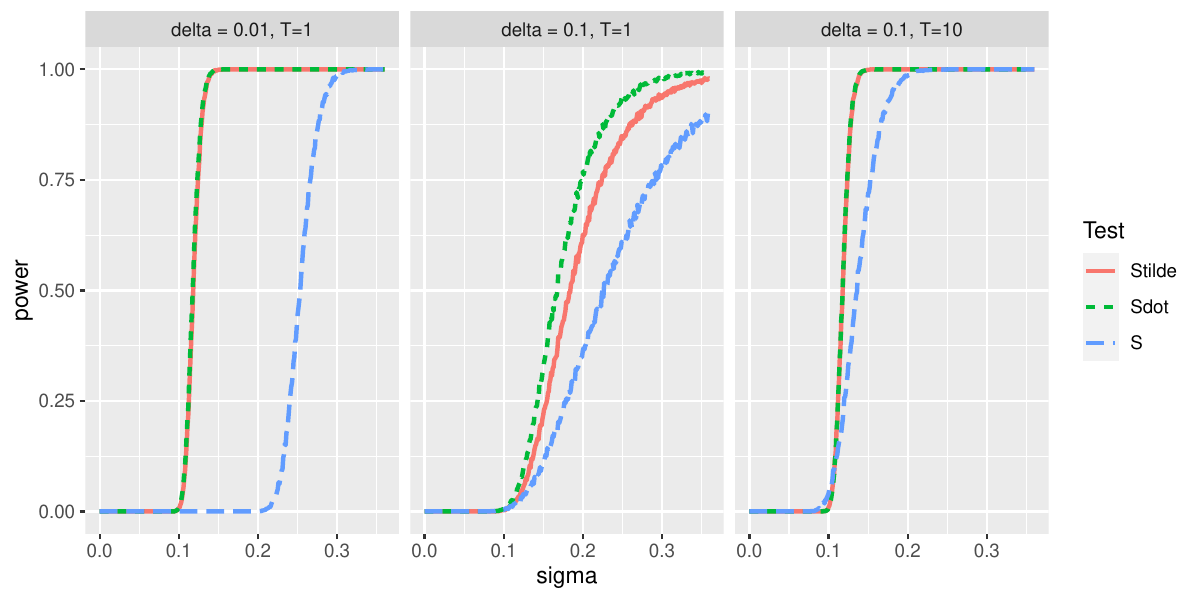}
    \caption{Power functions of the test of $H_0: \sigma^2 = 0.1^2$ against $H_1: \sigma^2 >0.1^2$ as a function of $\sigma_{20}^2$.  Processes $X_\sigma$   are simulated for $\sigma_{20}^2$ varying between $0$ and $0.36$. Three tests are considered:  the one-dimensional non-centered test $S$ with known drift (Section \ref{sec:noncentered}) in dashed blue line, with centered statistic $\dot S$ (Section \ref{sec:centereddriftknown}) in dotted green line,  with centered statistics and estimated drift $\tilde{S}$ (Section \ref{sec:centereddriftunknown}) in plain red line. Three designs are considered: $\Delta=0.01, T=1$ (left), $\Delta=0.1, T=1$ (middle) and $\Delta=0.1, T=10$ (right).}
    \label{fig:power_function_dim1}
\end{figure}

All three power functions are plotted on  Figure \ref{fig:power_function_dim1}. The performance of the centered statistics in tests $\dot\Upsilon$ and $\tilde\Upsilon$ are almost identical and   depend mostly on the number of available observations. The performance of the non-centered test $\Upsilon$ is sensitive to the step size $\Delta$.

Especially the power functions $\Pi(\dot\Upsilon)$ and $\Pi(\tilde\Upsilon)$ are identical  when $T=10, \Delta = 0.1$ and $T=1, \Delta = 0.01$. This is in accordance with  the concluding remark in Section \ref{sec:centereddriftknown}: the performance of the test does not depend on the time horizon nor on the step size, only on the number of observations. For the non-centered statistics $\Pi(\Upsilon)$, however, it is not the case: as the law of the statistics depends on the drift, the performance of the test depends both on the number of observations, and on the discretization step.

\subsection{2-dimensional process with known drift}
To illustrate how the method works in dimension two, we use a randomly perturbed sinusoïd $X_t=(X_{t,1},X_{t,2})$:
\begin{equation}\label{eq:sinusoid_SDE2d}
\begin{gathered}
dX_{t,1} = \theta_1 \sin(t) dt + \sigma_1 dW_{t,1},\\
dX_{t,2} = \theta_2\cos(t) dt + \sigma_2 dW_{t,2}.
\end{gathered}
\end{equation}
Parameters used for  simulations  are  $X_{0,1} = X_{0,2} = 0, \theta_1 = \theta_2 = 1$, $\sigma_2 = 1$. We generate $N=5000$ processes under $H_1$ with $\sigma^2_1$ varying between $0$ and $0.36$ (with a step $0.001$) in order to study the power of the test. We use 3 different scenarios: with $T=1, 
\Delta = 0.01$; $T = 1, \Delta = 0.1 $ and $T=10,\Delta = 0.1$.  

We define the power function as in \eqref{eq:power_function} for the 2-dimensional tests with known drift (Section \ref{sec:2dControlErrors}) or estimated (Section \ref{sec:2d_unknown_drift}), and for the multiple testing procedure (Section \ref{sec:multiple_tests}) with   either known drift, or estimated.

\begin{figure}
    \centering
     \includegraphics[width=1.\textwidth]{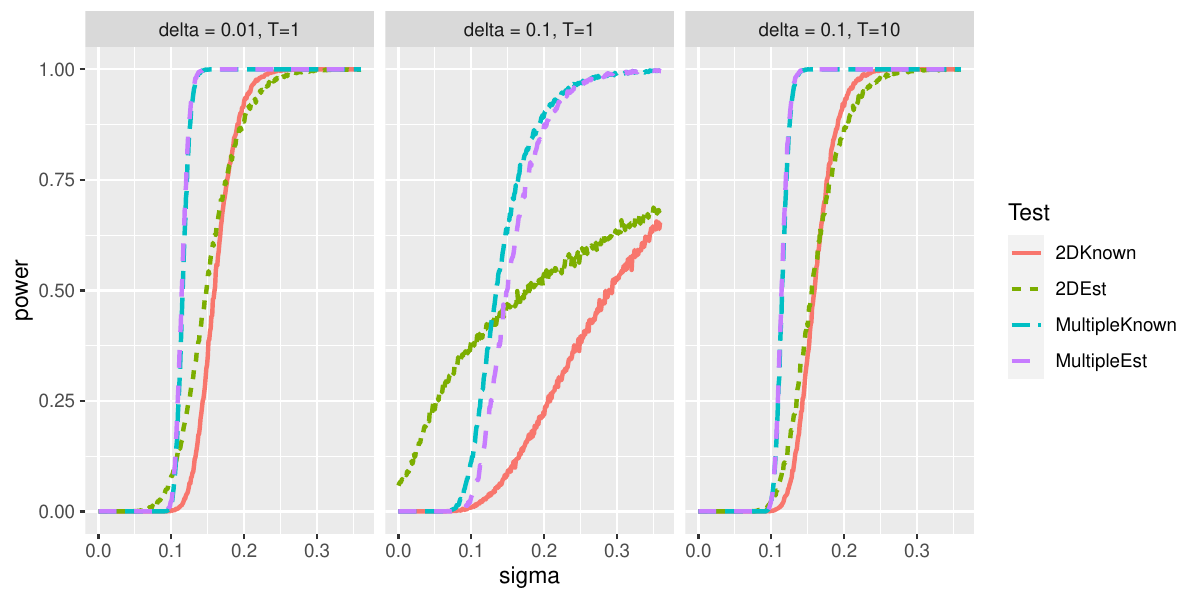}
    \caption{Power functions of the test of $H_0: \sigma_1^2\sigma_2^2 = \sigma_{20}^2$ against $H_1: \sigma_1^2\sigma_2^2 > \sigma_{20}^2$ as a function of $\sigma_{20}^2$.  Processes $X_\sigma$   are simulated for $\sigma_{20}^2$ varying between $0$ and $0.36$. Four tests are considered:  the 2-dimensional test with known drift (Section \ref{sec:2dControlErrors}) in plain red line, with  estimated drift (Section \ref{sec:2d_unknown_drift}) with dotted green line,  the multiple testing procedure (Section \ref{sec:multiple_tests}) with  known drift in blue dot-dashed line and estimated drift in magenta dashed line. Three designs are considered: $\Delta=0.01, T=1$ (left), $\Delta=0.1, T=1$ (middle) and $\Delta=0.1, T=10$ (right).}
    \label{fig:power_function_dim2}
\end{figure}
Results are presented in Figure \ref{fig:power_function_dim2}. 
For  2-dimensional tests,  the power  is influenced by the number of observations $n$. When $\Delta = 0.1, T=10$, the powers are almost identical to the case  $\Delta=0.01, T=1$. This is in accordance with the remark following Theorem \ref{thm:2d_test}, the separation rate of the two hypotheses depends only on the number of observations. Unsurprisingly, in the scenario with very few observations ($\Delta = 0.1, T=1$), the hypotheses fail to separate even when $\sigma^2>>\sigma^2_0$. When the parameters of the drift are estimated, the power of the test is slightly smaller. This is expected as the test statistic is build only on half of the sample (the first half sample being used to estimate the parameters). 

The multiple testing gives better results. For both known and estimated drift, the multiple test gives a perfect separation already at $\sigma = 0.14$ (for settings $\Delta=0.01, T = 1$ and $\Delta = 0.1, T=10$), while for the two-dimensional test a separation occurs closer to $\sigma = 0.25$. 


\section{Conclusions}\label{section:conclusions_dimtests}

We develop various tests of the diffusion coefficients of SDEs. In dimension one, we propose a test statistic that has an explicit distribution, even when the (linear) drift parameter  is unknown. The tests are of exact level $\alpha$. We also prove separability conditions to achieve a given power. The test with an unknown parameter can be applied to a non-parametric drift estimated by a projection on a functional basis, e.g. on a spline basis. It can therefore be used to test the diffusion coefficient of a one-dimensional SDE even when the drift is unknown. 

In dimension 2, we propose a test statistic, with a non-explicit distribution. However, thanks to concentration inequalities, we prove a test procedure with a non-asymptotic level. When the drift parameter is unknown, the test procedure is adapted by estimating the parameters on the first half of the sample and then applying the test statistic using  data from the second half of the sample. We therefore loose  power when the parameters are estimated, as the simulations  also illustrate. 

We therefore propose an alternative, which is also suitable for a dimension $d$ greater than 2. This alternative uses a one-dimensional test on each coordinate and corrects the procedure by a multiple testing approach. This allows to control the Type I error of the global test. Since  the  one-dimensional tests have the exact level even when the linear drift parameters are estimated, the multiple testing procedure  detects the diffusion coefficient with the exact level, even when the drift is estimated on a functional basis (spline basis, for example). 

 {Further work will continue  considering SDE whose drift depends on the process itself, as introduced in Section \ref{sec:1D_Drift_depend_Xt}}. 
 {A strategy is to approximate the drift by introducing an extra parameter, as done in this paper. The calibration of this hyperparameter could be done by cross-validation or test aggregation.}  {Another strategy would require the development of further concentration inequalities to prove the upper and lower bounds of the test statistics. Also the separation rate of the test we provided in this case is  strongly deteriorated when $\Delta>>n^{-1/2}$. We do not know if this is minimax, meaning that this deterioration is there whatever the test or if it is just due to our method. This could be a very interesting aspect to search for future work. }

\section*{Acknowledgments}
A.S. was supported by MIAI@Grenoble Alpes, (ANR-19-P3IA-0003) and by the LabEx PERSYVAL-Lab (ANR-11-LABX-0025-01) funded by the French program Investissement d’avenir. 

P.R.-B. was supported by the French government, through UCA$^{Jedi}$ and 3IA C\^ote d'Azur Investissements d'Avenir managed by the National Research Agency (ANR-15 IDEX-01 and ANR-19-P3IA-0002),  directly by the ANR project ChaMaNe (ANR-19-CE40-0024-02), and by the interdisciplinary Institute for Modeling in Neuroscience and Cognition (NeuroMod).

\bibliography{literature}

\newpage
\appendix
\section{Appendix}
\paragraph{Proofs for 1-D SDE}

\begin{proof}[Proof of Lemma \ref{lemma:mse_1d}]
We only prove (ii) as (i) is trivial.
Note that 

\noindent $\left( {X_{i\Delta} - X_{(i-1)\Delta}}\right) \sim \mathcal{N}\left( \theta\int_{(i-1)\Delta}^{i\Delta}f_{s}ds, \Delta \sigma^2 \right)$ and the increments are independent. As $\hat\theta$ is   normally distributed as a linear combination of normal variables, it is easy to see that $\mathbb{E}(\hat\theta)=\theta$ and 
 \begin{eqnarray*}
 Var\left[\hat\theta\right] &=& \frac{ \sum_{i=1}^{n} Var\left[ {X_{i\Delta} - X_{(i-1)\Delta}}\right]\left({\int_{(i-1)\Delta}^{i\Delta}f_{s}ds}\right)^2}{\left(\sum_{i=1}^{n}\left(\int_{(i-1)\Delta}^{i\Delta}f_{s}ds\right)^2\right)^{2}} \\
 &=& \frac{\Delta\sigma^2}{\sum_{i=1}^{n}\left(\int_{(i-1)\Delta}^{i\Delta}f_{s}ds\right)^2}.
 \end{eqnarray*}
\end{proof}

\begin{proof}[Proof of Lemma \ref{lemma:S_hat_dist}]
Let us introduce  for $i=1, \ldots, n$, $Y_i = \frac{X_{i\Delta}-X_{(i-1)\Delta}}{\sqrt{\Delta}}$ 
and the corresponding vector $Y=(Y_1, \ldots, Y_n)^t$, such that  $Y =Z\theta + \varepsilon$, with $\varepsilon\sim\mathcal{N}(0,\sigma^2I)$. Thus  $\hat \theta = (Z^tZ)^{-1}Z^tY$ and $\hat\xi_i = Y_i-Z_i\hat\theta $. Then we have $\hat \xi = (I-H)Y$ and $\hat \xi$ follows a normal distribution with variance $\sigma^2(I-H)$ (because $I-H$ is a projection matrix). 
Note that $\sum_{i=1}^n H_{ii}=1$ and $rank(H) = 1$. Thus $rank(C^tC)=n-1$ and we can deduce the last point. 
\end{proof}

\begin{proof}[Proof of Proposition \ref{prop:law_of_determinant}]
First, note that by Theorem 4.1.1. in \cite{Girko1990} $\dot{s}_i \sim \sigma^2_1 \sigma^2_2 \chi^2_{i,1} \chi^2_{i,2}$, where  $\chi^2_{i,k}$  denotes a variable distributed according to a chi-squared distribution with $k$ degrees of freedom, all variables being independent in $i$. Here we use the advantage that the covariance matrix of each vector-column is the same. The distribution of $\chi^2_{i,1} \chi^2_{i,2}$ is deduced from \cite{Wells1962} .  
The PDF of a product $\chi^2_{i,1} \chi^2_{i,2}$ is written as follows:
\begin{equation}\label{eq:PDF_omega_proof}
f(\omega) = \frac{\omega^{-1/4}K_{1/2}(\omega^{1/2})}{\sqrt{2}\Gamma(1) \Gamma(1/2)},
\end{equation}
where $K_v(x)$ is the modified Bessel function of the second kind. Further, in our specific case, $K_{1/2}(\omega^{1/2}) = \frac{1}{2}\sqrt{2\pi}e^{-\sqrt\omega}\omega^{-1/4}$, and simplify \eqref{eq:PDF_omega_proof}, obtaining:
\[
f(\omega) = \frac{1}{2}\frac{\omega^{-1/4} \sqrt{2\pi}e^{-\sqrt\omega}\omega^{-1/4}}{\sqrt{2\pi}} = \frac{1}{2} \omega^{-\frac{1}{2}}e^{-\sqrt\omega}. 
\]
We can deduce the expectation:
\begin{equation*}
\mathds{E}\left[\dot{s}_i \right] = \frac{1}{2}\int_0^\infty\sqrt{\frac{x}{\sigma^2_1\sigma^2_2}}e^{-\sqrt{\frac{x}{\sigma^2_1\sigma^2_2}}}dx = 2\sigma^2_1\sigma^2_2.
\end{equation*}
For the second moment the computation is similar:
\[
\mathds{E}\left[\dot{s}^2_i \right] = \frac{1}{2}\int_0^\infty\frac{x^{3/2}}{\sqrt{\sigma^2_1\sigma^2_2}}e^{-\sqrt{\frac{x}{\sigma^2_1\sigma^2_2}}}dx = 24 \sigma^4_1\sigma^4_2.
\]
Finally, note that 
\begin{equation*}
\mathds{P}\left(\dot{s}_i \leq x \right) = \mathds{P}\left(\sigma^2_1 \sigma^2_2 \chi^2_{i,1} \chi^2_{i,2} \leq x \right) = \mathds{P}\left( \chi^2_{i,1} \chi^2_{i,2} \leq \frac{ x }{\sigma^2_1 \sigma^2_2}\right) = \frac{1}{2}\int_0^{\frac{ x }{\sigma^2_1 \sigma^2_2}} \frac{e^{-\sqrt\omega}}{\sqrt{\omega}}d\omega.
\end{equation*}
Computing the integral, we obtain the result. 
\end{proof}

\paragraph{Proofs for 2-D SDE}

 {Before we proceed, let us establish a state an auxillary result from analysis:
\begin{prop}\label{prop:u_exp}
    For any $u>0$, the following holds:
    \[
    0\leq \frac{e^{-u} + u - 1}{u^2}\leq \frac{1}{2}.
    \]
\end{prop}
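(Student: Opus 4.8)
The plan is to prove this as a routine calculus inequality, with no probabilistic content. Write $\phi(u) = e^{-u} + u - 1$, so that since $u^2 > 0$ the claim is equivalent to $0 \le \phi(u) \le u^2/2$ for every $u > 0$.

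For the left inequality I would note that $\phi(0) = 0$ and $\phi'(u) = 1 - e^{-u} \ge 0$ for all $u \ge 0$, so $\phi$ is nondecreasing on $[0,\infty)$ and therefore $\phi(u) \ge \phi(0) = 0$; dividing by $u^2$ gives the claim. For the right inequality the cleanest route is Taylor's theorem with the Lagrange form of the remainder applied to $t \mapsto e^{-t}$ at $t = 0$: for each fixed $u > 0$ there is some $\xi \in (0,u)$ with
\[
e^{-u} = 1 - u + \tfrac12 u^2 e^{-\xi},
\]
so that $\phi(u)/u^2 = \tfrac12 e^{-\xi}$, and since $0 < e^{-\xi} \le 1$ this simultaneously gives $0 < \phi(u)/u^2 \le 1/2$. (An equally elementary alternative avoiding Taylor's theorem: let $\psi(u) = \tfrac12 u^2 - \phi(u)$; then $\psi(0) = \psi'(0) = 0$ and $\psi''(u) = 1 - e^{-u} \ge 0$ on $[0,\infty)$, so $\psi'$ is nondecreasing, hence $\psi' \ge \psi'(0) = 0$, hence $\psi$ is nondecreasing, hence $\psi \ge \psi(0) = 0$, which is the bound $\phi(u) \le u^2/2$.)

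There is no real obstacle here; the estimate is standard. The only point requiring a word of care is that the quotient is defined only for $u > 0$, which is why the bounds are stated on $(0,\infty)$; should the endpoint ever be needed, the relevant value is the limit $\lim_{u\to 0^+}\phi(u)/u^2 = 1/2$, which is consistent with the stated interval.
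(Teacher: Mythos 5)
Your proof is correct. It differs from the paper's argument only in the choice of elementary tool: the paper deduces the left bound directly from the tangent-line inequality $e^{-u}\ge 1-u$, and for the right bound studies $f(u)=e^{-u}+u-1-\tfrac{u^2}{2}$, showing $f(0)=0$ and $f'(u)=-e^{-u}+1-u\le 0$ (again by $e^{-u}\ge 1-u$), so $f\le 0$; you instead get the left bound from the monotonicity of $\phi(u)=e^{-u}+u-1$ and the right bound from Taylor's theorem with Lagrange remainder, $e^{-u}=1-u+\tfrac12 u^2 e^{-\xi}$, which yields $\phi(u)/u^2=\tfrac12 e^{-\xi}$ and hence both bounds (in fact strictly) in one stroke, together with the limiting value $\tfrac12$ at $0^+$ that is relevant to how the paper uses the bound, namely $h(\lambda\dot s_i)\le \lim_{u\to 0}h(u)=\tfrac12$ in the proof of Theorem~\ref{thm:sub_gaussian}. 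Your second, Taylor-free alternative (two successive monotonicity steps for $\psi=\tfrac12u^2-\phi$) is essentially the paper's argument pushed one derivative further, so nothing is gained or lost there; the Taylor route is slightly more informative, the paper's route slightly more bare-hands, and both are complete.
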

\begin{proof}
First, we recall that $e^{-u} \geq 1-u, \forall u$. It proves the first part of inequality automatically. For the second part, let us consider function $f(u) = e^{-u} + u - 1 -\frac{u^2}{2}$. Note that $f(0) = 0$. Then, it is enough to prove that $f(u)$ is descending in order for the second part of inequality to hold. For that, it is enough to look at the sign of $f(u)$:
\[
f^\prime(u) = -e^{-u} + 1-u \leq 0,
\]
by using again that $e^{-u} \geq 1-u, \forall u$. 
\end{proof}
}

\begin{proof}[Proof of Theorem \ref{thm:sub_gaussian}]
\textbf{Proof of 1.}  
First, note that 
\[
\mathds{P}\left(\dot{S} - \mathds{E}\left[\dot{S}\right] \leq -t  \right) = \mathds{P}\left(\sum_{i=1}^{n/2}\left(\dot{s}_i -\mathds{E}\left[\dot{s}_i\right]\right) \leq -nt/2 \right).
\]
For all $ \lambda > 0$, we have: 
\[
\mathds{P}\left(\sum_{i=1}^{n/2}\left(\dot{s}_i -\mathds{E}\left[\dot{s}_i\right]\right) \leq -nt/2 \right)  = \mathds{P}\left(e^{-\lambda(\sum_{i=1}^{n/2}\dot{s}_i - \sum_{i=1}^{n/2}\mathds{E}\left[\dot{s}_i\right])} \geq  e^{\lambda nt/2} \right).
\]
Then, by Markov's inequality, we have: 
\[
\mathds{P}\left(e^{-\lambda(\sum_{i=1}^{n/2}\dot{s}_i - \sum_{i=1}^{n/2}\mathds{E}\left[\dot{s}_i\right])} \geq  e^{\lambda nt/2} \right) \leq \mathds{E}\left[e^{-\lambda(\sum_{i=1}^{n/2}\dot{s}_i - \sum_{i=1}^{n/2}\mathds{E}\left[\dot{s}_i\right])} \right]e^{-\lambda nt/2}.
\]
Since all the $\dot{s}_i$ are independent in $i$, we note that 
\begin{multline*}
\mathds{E}\left[e^{-\lambda(\sum_{i=1}^{n/2}\dot{s}_i - \sum_{i=1}^{n/2}\mathds{E}\left[\dot{s}_i\right])} \right]e^{-\lambda nt/2} = e^{-\lambda nt/2}\prod_{i=1}^{n/2} \mathds{E}\left[e^{-\lambda(\dot{s}_i - \mathds{E}\left[\dot{s}_i\right])} \right] \\ =e^{-\lambda nt/2 + \lambda \sum_{i=1}^{n/2}\mathds{E}\left[\dot{s}_i\right]}\prod_{i=1}^{n/2} \mathds{E}\left[e^{-\lambda\dot{s}_i } \right].
\end{multline*}
Now, let us rewrite: 
\[
\mathds{E}\left[e^{-\lambda\dot{s}_i } \right] = 1 - \lambda\mathds{E}[\dot{s}_i] + \lambda^2\mathds{E}\left[\dot{s}^2_i \frac{e^{-\lambda\dot{s}_i} + \lambda\dot{s}_i - 1}{\left(\lambda \dot{s}_i\right)^2} \right].
\]
Now, let us define the following function:
\[
h(u):= \frac{e^{-u} + u - 1}{u^2},
\]
which is decreasing for any $u>0$  {(see Proposition \ref{prop:u_exp})}. 
For $\lambda > 0$ and as $\dot{s}_i \geq 0$, we have: 
\[
h\left(\lambda \dot{s}_i \right)  \leq  { \lim_{u\rightarrow 0} h(u)  \leq \frac{1}{2}}.
\]
Then,
$\mathds{E}\left[\dot{s}^2_i h\left(\lambda \dot{s}_i \right) \right] \leq \mathds{E}\left[\dot{s}^2_i \right]$ and  
$\mathds{E}\left[e^{-\lambda\dot{s}_i } \right]  \leq  1 - \lambda\mathds{E}[\dot{s}_i] +  {\frac{\lambda^2}{2}}\mathds{E}\left[\dot{s}^2_i \right]$. 
Finally, we obtain: 
\begin{multline}
\mathds{P}\left(e^{-\lambda(\dot{S} - \mathds{E}\left[\dot{S}\right])}  \geq e^{\lambda t} \right) \leq e^{-\lambda n t/2 + \lambda \sum_{i=1}^{n/2}\mathds{E}\left[\dot{s}_i\right]}\prod_{i=1}^{n/2} \left(1 - \lambda \mathds{E}[\dot{s}_i] +  {\frac{\lambda^2}{2}}\mathds{E}\left[\dot{s}^2_i \right] \right) \nonumber \\ 
\leq \exp \left(-\lambda n t/2 + \lambda \sum_{i=1}^{n/2}\mathds{E}\left[\dot{s}_i\right] - \lambda \sum_{i=1}^{n/2} \mathds{E}[\dot{s}_i] +  {\frac{\lambda^2}{2}} \sum_{i=1}^{n/2}\mathds{E}\left[\dot{s}^2_i \right] \right) \nonumber\\ 
= \exp \left(-\lambda nt/2+  {\frac{\lambda^2}{2}}\sum_{i=1}^{n/2}\mathds{E}\left[\dot{s}^2_i \right]  \right) . 
\end{multline}
Then we maximize the last expression with respect to $\lambda$.  The maximum is obtained for
$\hat\lambda = \frac{nt }{ {2} \sum_{i=1}^{n/2} \mathds{E}\left[\dot{s}^2_i \right] }.$
Using $\mathds{E}\left[\dot{s}^2_i \right] = 24 \sigma^4_1\sigma^4_2$ (Proposition \ref{prop:law_of_determinant}), we obtain the bound
\begin{multline*}
\exp\left( -\hat\lambda nt/2+  {\frac{\hat\lambda^2}{2}}\sum_{i=1}^{n/2} \mathds{E}\left[\dot{s}^2_i \right]  \right) 
=  \exp\left(- \frac{(nt/2)^2 }{ {2} \sum_{i=1}^{n/2} \mathds{E}\left[\dot{s}^2_i \right] }\right) =  \exp\left(-\frac{(nt/2)^2}{n  {24} \sigma^4_1\sigma^4_2} \right).
\end{multline*}
It gives the result. 

\textbf{Proof of 2.} By Chebyshev's inequality we have
\[
 \mathds{P}\left(\left|\sum_{i=1}^{n/2} \dot{s}_i - \mathds{E}\left[\sum_{i=1}^{n/2} \dot{s}_i\right]\right|\geq nt/2\right) \leq \frac{Var\left[\sum_{i=1}^{n/2} \dot{s}_i\right]}{\left(nt/2 \right)^2}.
\]
It implies:
\begin{multline*}
 \mathds{P}\left(\sum_{i=1}^{n/2} \dot{s}_i - \mathds{E}\left[\sum_{i=1}^{n/2} \dot{s}_i\right]\geq nt/2 \right) \\  \\ \leq \frac{Var\left[\sum_{i=1}^{n/2} \dot{s}_i\right]}{\left(nt/2 \right)^2} - \mathds{P}\left(\sum_{i=1}^{n/2} \dot{s}_i -  \mathds{E}\left[\sum_{i=1}^{n/2}\dot{s}_i\right]  \leq - nt/2   \right).
\end{multline*}
Note that $\mathds{P}\left(\sum_{i=1}^{n/2} \dot{s}_i -  \mathds{E}\left[\sum_{i=1}^{n/2} \dot{s}_i\right]  \leq - nt/2  \right)$ is evaluated in  \eqref{eq:sub_gaussian} and is decaying exponentially fast as $t$ grows. In comparison, the first term is decaying at a much slower rate. Thus, the principal term which controls the upper bound is given by $\frac{Var\left[\sum_{i=1}^{n/2} \dot{s}_i\right]}{\left(nt/2 \right)^2}$.
Finally, the following result is obtained:
\[
\mathds{P}\left(\dot{S} - \mathds{E}\left[\dot{S}\right] \geq t  \right) \leq \frac{10n\sigma^4_1\sigma^4_2}{\left(nt/2  \right)^2} = \frac{40}{nt^2} \sigma^4_1\sigma^4_2.
\]
\end{proof}

\paragraph{Sharp estimate on classical quantiles}
\begin{lemma}
\label{quant_bound}
For any  $\alpha \in (0,0.5]$, we always have that 
$$q_{\mathcal{N},1-\alpha} \leq \sqrt{2\log(1/\alpha)}$$
and that
$$q_{\chi_n^2(\lambda),1-\alpha}\leq (\sqrt{n}+\sqrt{2\log(1/\alpha)})^2+2\sqrt{2 \log(1/\alpha)} \lambda^{1/2}+\lambda.$$
With $\alpha=0.5$, we get the slightly better bound
$$q_{\chi_n^2(\lambda),0.5}\leq (\sqrt{n}+\sqrt{2\log(2)})^2+\lambda.$$
Moreover if $\alpha\leq 1/\sqrt{2\pi}\simeq 0.39,$ we also have that
$$q_{\mathcal{N},1-\alpha} \geq \sqrt{\log(1/\alpha)}$$
and
$$q_{\chi_n^2(\lambda),1-\alpha} \geq n-1+2\sqrt{ \log(1/\alpha)} \lambda^{1/2}+\lambda+  \log(1/\alpha).$$

For $\beta<0.5$,  we have the following bound 
$$q_{\chi_n^2(\lambda),\beta}\geq n+\lambda-\sqrt{\frac{2(n+2\lambda)}{\beta}}.$$ 
\end{lemma}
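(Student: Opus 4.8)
The plan is to reduce each inequality to a one-sided deviation estimate, using that for a continuous distribution function $F$ one has $q_{F,1-\alpha}\le a \iff \mathds{P}_F((a,\infty))\le\alpha$ and $q_{F,1-\alpha}\ge a \iff \mathds{P}_F((a,\infty))\ge\alpha$, and symmetrically $q_{F,\beta}\ge a \iff \mathds{P}_F((-\infty,a])\le\beta$. Hence the \emph{upper} bounds on quantiles will come from upper tail bounds (Chernoff / Gaussian concentration), while the \emph{lower} bounds need lower tail bounds. The Gaussian upper bound $q_{\mathcal{N},1-\alpha}\le\sqrt{2\log(1/\alpha)}$ is then immediate from the standard estimate $\mathds{P}(Z>t)\le\tfrac12 e^{-t^2/2}$ evaluated at $t=\sqrt{2\log(1/\alpha)}$, which gives a tail $\le\alpha/2\le\alpha$.

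For the noncentral chi-square I would use the representation $\chi^2_n(\lambda)\stackrel{d}{=}\|g+a\|^2$ with $g\sim\mathcal{N}(0,I_n)$ and $\|a\|^2=\lambda$. Since $g\mapsto\|g+a\|$ is $1$-Lipschitz, the Gaussian concentration (Borell--TIS) inequality gives $\mathds{P}(\|g+a\|\ge\mathds{E}\|g+a\|+s)\le e^{-s^2/2}$, so the $(1-\alpha)$-quantile of $\|g+a\|$ is at most $\mathds{E}\|g+a\|+\sqrt{2\log(1/\alpha)}$; squaring, $q_{\chi^2_n(\lambda),1-\alpha}\le(\mathds{E}\|g+a\|)^2+2\,\mathds{E}\|g+a\|\,\sqrt{2\log(1/\alpha)}+2\log(1/\alpha)$. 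Here I would bound $\mathds{E}\|g+a\|$ in two ways: by Jensen $(\mathds{E}\|g+a\|)^2\le\mathds{E}\|g+a\|^2=n+\lambda$, while by the triangle inequality and $\mathds{E}\|g\|\le\sqrt n$ one gets $\mathds{E}\|g+a\|\le\sqrt n+\sqrt\lambda$; using the first on the quadratic term and the second on the cross term yields exactly $(\sqrt n+\sqrt{2\log(1/\alpha)})^2+2\sqrt{2\log(1/\alpha)}\,\lambda^{1/2}+\lambda$. For $\alpha=1/2$ one takes $s=\sqrt{2\log 2}$ (so that $e^{-s^2/2}=1/2$, bounding the median) and trims the $\lambda^{1/2}$ cross term using the pointwise bound $\|g+a\|\le\|g\|+\sqrt\lambda$ and a symmetry argument on the component of $g$ along $a$, giving the slightly sharper $(\sqrt n+\sqrt{2\log 2})^2+\lambda$.

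For the lower bounds I would separate two regimes. The bound $q_{\chi^2_n(\lambda),\beta}\ge n+\lambda-\sqrt{2(n+2\lambda)/\beta}$ is just Cantelli's (one-sided Chebyshev) inequality applied to $\chi^2_n(\lambda)$, whose mean is $n+\lambda$ and variance $2n+4\lambda$: $\mathds{P}(\chi^2_n(\lambda)\le n+\lambda-t)\le(2n+4\lambda)/t^2$, and $t=\sqrt{2(n+2\lambda)/\beta}$ makes this $\le\beta$. The sharper lower bounds $q_{\mathcal{N},1-\alpha}\ge\sqrt{\log(1/\alpha)}$ and $q_{\chi^2_n(\lambda),1-\alpha}\ge n-1+2\sqrt{\log(1/\alpha)}\,\lambda^{1/2}+\lambda+\log(1/\alpha)$ genuinely need lower estimates of the tails. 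For the Gaussian one I would use a Mills-ratio lower bound such as $\mathds{P}(Z>t)\ge(\sqrt{t^2+2}-t)\,\phi(t)$ and check that it dominates $e^{-t^2}$ on the range of $t$ singled out by $\alpha\le 1/\sqrt{2\pi}$. For the chi-square one I would use the decomposition $\chi^2_n(\lambda)\stackrel{d}{=}(U+\sqrt\lambda)^2+V$ with $U\sim\mathcal{N}(0,1)$ independent of $V\sim\chi^2_{n-1}$, and bound $\mathds{P}(\chi^2_n(\lambda)\le x)$ at the target value $x=n-1+(\sqrt\lambda+\sqrt{\log(1/\alpha)})^2$ through explicit estimates of the $\mathcal{N}(0,1)$ and $\chi^2_{n-1}$ distribution functions (using in particular $\mathrm{med}(\chi^2_{n-1})<n-1$), the restriction $\alpha\le 1/\sqrt{2\pi}$ being exactly what collapses the bound to the stated closed form and what forces the $n-1$ rather than $n$ in the numerator.

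The main difficulty is this last step: the convenient inputs --- the Chernoff bound, Gaussian concentration, Cantelli --- control tails only from above, which suffices for every upper bound on a quantile and for the crude $\beta$-lower bound, whereas the sharp lower bounds on the $(1-\alpha)$-quantiles require anti-concentration-type control that is clean only in a restricted range; this is why the statement carries the hypotheses $\alpha\le 1/\sqrt{2\pi}$ and $\beta<1/2$, and it is also the reason one cannot, by these elementary means, close the gap between the lower and upper bounds --- a gap the authors explicitly comment on in the main text.
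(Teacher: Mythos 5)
Your handling of the two upper bounds is correct and follows a genuinely different route from the paper. You apply Gaussian concentration (Borell--TIS) directly to the $1$-Lipschitz map $g\mapsto\|g+a\|$ with $\|a\|^2=\lambda$, and then split $(\mathds{E}\|g+a\|+s)^2$ by using Jensen, $(\mathds{E}\|g+a\|)^2\le n+\lambda$, on the quadratic term and the triangle inequality, $\mathds{E}\|g+a\|\le\sqrt n+\sqrt\lambda$, on the cross term; this does reproduce exactly $(\sqrt n+\sqrt{2\log(1/\alpha)})^2+2\sqrt{2\log(1/\alpha)}\lambda^{1/2}+\lambda$ and is self-contained. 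The paper instead only treats the central $\chi^2_n(0)$ by concentration and imports the passage from central to noncentral quantiles (both the upper and the lower inequality, with the $q_{\mathcal N,1-\alpha}$ cross terms) from a cited reference, together with Gordon's inequality for the Gaussian quantiles where you use the elementary $\mathds{P}(Z>t)\le\tfrac12 e^{-t^2/2}$; for the upper half your derivation is arguably cleaner. The bound on $q_{\chi^2_n(\lambda),\beta}$ via Cantelli/Chebyshev with mean $n+\lambda$ and variance $2(n+2\lambda)$ is the same argument as in the paper.

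The gap is in the sharp lower bounds and the $\alpha=0.5$ refinement, which you only sketch. For $q_{\mathcal N,1-\alpha}\ge\sqrt{\log(1/\alpha)}$ the entire content is the verification you defer, namely that $\mathds{P}(Z>t)\ge e^{-t^2}$ at $t=\sqrt{\log(1/\alpha)}$; this check in fact fails on part of the stated range: for $\alpha=0.05\le 1/\sqrt{2\pi}$ one has $q_{\mathcal N,0.95}\approx1.645<\sqrt{\log 20}\approx1.731$, i.e. $\mathds{P}(Z>1.731)\approx0.042<0.05$, so no Mills-ratio computation can close it there (the inequality holds only for $\alpha$ roughly below $0.03$; the paper's own derivation of this step rests on the claim $v^2/4\ge 2v-2\log v$ for all $v>0$, which is false for moderate $v$, so the defect is shared, but your plan as written cannot repair it). For the noncentral lower bound, your decomposition $\chi^2_n(\lambda)\overset{d}{=}(U+\sqrt\lambda)^2+V$ with $V\sim\chi^2_{n-1}$ amounts to re-deriving the quantile inequality that the paper simply cites, but none of the required distribution-function estimates are carried out, so this part is a plan rather than a proof. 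Finally, the $\alpha=0.5$ argument does not close as described: conditioning on the sign of the component of $g$ along $a$ (which is independent of $\|g\|^2$) and using concentration of $\|g\|^2$ bounds $\mathds{P}\bigl(\|g+a\|^2>(\sqrt n+\sqrt{2\log2})^2+\lambda\bigr)$ only by $\tfrac12+\tfrac14=\tfrac34$, not by $\tfrac12$; the paper obtains this case for free from the cited central-to-noncentral inequality because $q_{\mathcal N,0.5}=0$.
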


\begin{proof}
 \cite{gordon1941} establishes that if $\Phi(x)$ is the c.d.f. of $\mathcal{N}(0,1)$ then for all positive $x$,
$$\frac{e^{-x^2/2}}{\sqrt{2\pi} \left(x+x^{-1}\right)} \leq \Phi(x) \leq \frac{e^{-x^2/2}}{\sqrt{2\pi} x}. $$
Since the function $f(x)=\frac{e^{-x^2/2}}{x}$ is strictly decreasing, we can obtain an upper bound on the quantile  by finding a $z_\alpha$ such that $f(z_\alpha)\leq \sqrt{2\pi}\alpha$ for $\alpha<0.5$. Choosing $x_\alpha=\sqrt{2 \log(1/\alpha)}$, we get $f(x_\alpha)=\frac{\alpha}{\sqrt{2\log(1/\alpha)}}\leq \sqrt{2\pi}\alpha$ since $\alpha<0.5$.

For the lower bound of the Gaussian quantile, note that for all $v>0$, $v-\log(v) \geq 1$, so $x_t=\sqrt{2\log(1/t)-2\log\log(1/t)}$ is well defined for all $t\leq 1$ and satisfies $x_t\geq \sqrt{2}$, for all $t\leq 1$. So Gordon's lower bound implies that
$$\Phi(x_t)\geq \frac{1}{2\sqrt{2\pi}}f(x_t)=\frac{t \log(1/t)}{2\sqrt{2\pi}\sqrt{2\log(1/t)-2\log\log(1/t)}}.$$
But $v^2/4\geq 2v-2\log(v)$ holds for all positive $v$, so
$$\log(1/t)^2 /4 \geq 2\log(1/t)-2\log\log(1/t)$$
and $\Phi(x_t)\geq \frac{t}{\sqrt{2\pi}}.$

By taking $t=\sqrt{2\pi}\alpha$, we get therefore that lower bound $$q_{\mathcal{N},1-\alpha} \geq \sqrt{2 \log(\sqrt{2\pi}/\alpha)-2 \log\log(\sqrt{2\pi}/\alpha)}.$$  It holds for all $\alpha\leq 1/\sqrt{2\pi}$.

But by studying the function we can see that 
$$2 \log(\sqrt{2\pi}/\alpha)-2 \log\log(\sqrt{2\pi}/\alpha)\geq \log(1/\alpha).$$

For the chi-square distribution with central parameter $\lambda$, we use the results by \cite{robert1990}, which states that 
$$q_{\chi_n^2(\lambda),1-\alpha}\leq q_{\chi_n^2(0),1-\alpha}+2 q_{\mathcal{N},1-\alpha} \lambda^{1/2}+\lambda.$$

We use Gaussian concentration of measure \citep{Boucheron2013} to derive that, if $X$ is a standard Gaussian vector of dimension $n$ and $\|X\|$ designs its euclidean norm, then for all positive $x$
$$\mathbb{P}(\|X\|\geq \mathbb{E}(\|X\|)+x)\leq e^{-x^2/2}.$$
Since $\mathbb{E}(\|X\|)\leq \sqrt{n}$, we have that
$$\mathbb{P}\left(\|X\|^2\geq (\sqrt{n}+\sqrt{2\log(1/\alpha)})^2\right)\leq \alpha.$$
Combined with the bound on the Gaussian quantiles, it give us the upper bound. For $\alpha=0.5,$ note that $q_{\mathcal{N},1-\alpha}=0$.

For the lower bound, \cite{robert1990} also proves that 
$$q_{\chi_n^2(\lambda),1-\alpha}\geq n-1+q_{\mathcal{N},1-\alpha}^2+2 q_{\mathcal{N},1-\alpha} \lambda^1/2+\lambda.$$
Combined with the lower bound on Gaussian quantiles, we get the corresponding lower bound.

For the rougher bound for small $\beta$, note that if $Z\sim \chi^2_n(\lambda)$ then
$$\mathbb{E}(Z)=n+\lambda \quad \mbox{and}\quad Var(Z)=2(n+2\lambda).$$
So by Bienaymé Tchebicheff inequality, we obtain for all positive $x$
$$\mathbb{P}(Z\leq \mathbb{E}(Z)-x)\leq \frac{Var(Z)}{x^2}.$$
So by choosing $x=\sqrt{\frac{2(n+2\lambda)}{\beta}}$, we obtain the desired lower bound.
\end{proof}






\end{document}